\documentclass[11pt]{amsart}
\usepackage{amsxtra}
\usepackage[all]{xy}

\addtolength{\topmargin}{-0.6cm}
\addtolength{\textheight}{1.2cm}
\addtolength{\evensidemargin}{-0.6cm}
\addtolength{\oddsidemargin}{-0.6cm}
\addtolength{\textwidth}{1.4cm}
\theoremstyle{plain}

\newcommand{\cleqn}{\setcounter{equation}{0}}
\newcommand{\clth}{\setcounter{theorem}{0}}
\newcommand {\sectionnew}[1]{\section{#1}\cleqn\clth}

\newtheorem{theorem}{Theorem}[section]
\newtheorem{lemma}[theorem]{Lemma}
\newtheorem{definition-lemma}[theorem]{Definition-Lemma}
\newtheorem{proposition}[theorem]{Proposition}
\newtheorem{corollary}[theorem]{Corollary}

\newtheorem{example}[theorem]{Example}
\newtheorem{remark}[theorem]{Remark}


\newcommand \lb[1]{\label{#1}}


\newcommand{\st} [1]     {\scriptscriptstyle{{#1}}}
\newcommand{\rmap}       {\longrightarrow}

\newcommand{\Ker}        {{\mathrm {ker}}}


\newcommand{\Id}         {\mathrm{Id}}

\newcommand{\Grd}        {\mathcal{G}}


\newcommand{\SP} [1]     {{\left\langle {{#1}} \right\rangle}}




\newcommand{\LF}       {\mathrm{Lie}} 



\newcommand{\sour}        {\mathsf{s}}
\newcommand{\tar}         {{\mathsf{t}}}


\newcommand{\Lie}        {\mathcal L}

\newcommand{\reals}      {{\mathbb R}}


\begin{document}
\title[]
{Linear and multiplicative 2-forms}

\author[]{Henrique Bursztyn, Alejandro Cabrera, Cristi\'an Ortiz}
\address{Instituto de Matem\'atica Pura e Aplicada,
Estrada Dona Castorina 110, Rio de Janeiro, 22460-320, Brasil }
\email{henrique@impa.br, cabrera@impa.br, cortiz@impa.br}

\date{}

\maketitle

\begin{abstract}
We study the relationship between multiplicative 2-forms on Lie groupoids and
linear 2-forms on Lie algebroids, which leads to a new approach to the
infinitesimal description of multiplicative 2-forms and to the
integration of twisted Dirac manifolds.
\end{abstract}

\tableofcontents

\sectionnew{Introduction}\lb{intro}

The main purpose of this paper is to offer an  alternative viewpoint
to the study of multiplicative 2-forms on Lie groupoids and their
infinitesimal counterparts carried out in \cite{bcwz}. This study
turns out to be closely related to topics such as equivariant
cohomology and generalized moment map theories, see, e.g.,
\cite{BC,bcwz,xu}. A particularly important case is that of the
symplectic multiplicative 2-forms of \textit{symplectic groupoids}
\cite{CDW}, whose infinitesimal counterparts are Poisson structures.
As shown in \cite{bcwz}, infinitesimal versions of more general
multiplicative 2-forms include twisted Dirac structures in the sense
of \cite{SW}.

Let $\Grd$ be a Lie groupoid over $M$, with source and target maps
$\sour, \tar: \Grd \rmap M$, and multiplication $m:\Grd^{(2)}\rmap
\Grd$. Let $A$ be the Lie algebroid of $\Grd$, with Lie bracket
$[\cdot,\cdot]$ on $\Gamma(A)$ and anchor $\rho: A\rmap TM$. A
2-form $\omega \in \Omega^2(\Grd)$ is called \textit{multiplicative}
if
$$
m^*\omega = p_1^*\omega + p_2^*\omega,
$$
where $p_1, p_2: \Grd^{(2)}\rmap \Grd$ are the natural projections.
Given a closed $3$-form $\phi\in \Omega^3(M)$, we say that $\omega$
is \textit{relatively $\phi$-closed} if $\mathrm{d}\omega= \sour^*\phi -
\tar^*\phi$. The main result in \cite{bcwz} asserts that, if $\Grd$
is $\sour$-simply-connected, then there exists a one-to-one
correspondence between multiplicative 2-forms $\omega\in
\Omega^2(\Grd)$ and  vector-bundle maps $\sigma: A\rmap T^*M$
satisfying
\begin{align*}
&\SP{\sigma(u),\rho(v)}= - \SP{\sigma(v),\rho(u)}\\
&\sigma([u,v])=\Lie_{\rho(u)}\sigma(v)-i_{\rho(v)}\mathrm{d}\sigma(u) +
i_{\rho(v)}i_{\rho(u)}\phi,
\end{align*}
for all $u,v \in \Gamma(A)$. We refer to such maps $\sigma$ as
\textit{IM 2-forms} relative to $\phi$ (\textit{IM} stands for
\textit{infinitesimal multiplicative}). If $L\subset TM\oplus T^*M$
is a $\phi$-twisted Dirac structure \cite{courant,SW}, then the
projection $L\rmap T^*M$ is naturally an IM 2-form, so the
correspondence above includes the integration of twisted Dirac
structures as a special case.

The IM 2-form associated with a multiplicative 2-form $\omega\in
\Omega^2(\Grd)$ is simply
\begin{equation}\label{eq:sig}
\sigma(u)= i_{u}\omega|_{TM}, \;\;\; u\in A,
\end{equation}
where $A$ and $TM$ are naturally viewed as subbundles of $T\Grd|_M$.
The construction of $\omega$ from a given $\sigma:A\rmap T^*M$ in
\cite[Sec.~5]{bcwz} relies on the identification of $\Grd$ with
$A$-homotopy classes of $A$-paths (in the sense of \cite{CF}, cf.
\cite{severa}), in such a way that $\omega$ is obtained by a
variation of the infinite dimensional reduction procedure of
\cite{catfel}. A different, more general, viewpoint to this problem
has been recently studied in \cite{AC}, where this correspondence is
seen as part of a general Van Est isomorphism.

In this paper, we avoid the use of path spaces by noticing that the
construction of a multiplicative $\omega\in \Omega^2(\Grd)$ out of
an IM 2-form $\sigma$ can be phrased as the integration of a
suitable Lie algebroid morphism, similar in spirit to the approach
of Mackenzie and Xu \cite{Mac-Xu,Mac-Xu2} to the problem of
integrating Lie bialgebroids to Poisson groupoids, which served as
our main source of inspiration.

We notice that any multiplicative 2-form $\omega \in \Omega^2(\Grd)$
naturally induces a 2-form $\Lambda\in \Omega^2(A)$ on the total
space of $A$, which is \textit{linear} in a suitable sense. We show
that, when $\omega$ is relatively $\phi$-closed, the 2-form
$\Lambda$ is totally determined by the map $\sigma$ \eqref{eq:sig}
and $\phi$ via the formula
\begin{equation}\label{eq:Lambda}
\Lambda = - (\sigma^*\omega_{can} + \rho^*\tau(\phi)),
\end{equation}
where $\omega_{can}$ is the canonical symplectic form on $T^*M$, and
$\tau(\phi)\in \Omega^2(TM)$
is the 2-form defined, at each point
$X\in TM$, by $\tau(\phi)|_X=p_M^*(i_X\phi)$, where $p_M:TM\rmap M$
denotes the natural projection.

As a key step to reconstructing multiplicative 2-forms from
infinitesimal data, consider an arbitrary Lie algebroid $A\rmap M$,
together with a  vector-bundle map $\sigma: A\rmap T^*M$ and a
closed form $\phi \in \Omega^3(M)$. Let us use $\sigma$ and $\phi$
to define $\Lambda\in \Omega^2(A)$ by \eqref{eq:Lambda}. Our main
observation is that the bundle map
$$
\Lambda^\sharp:TA\rmap T^*A, \;\; U\mapsto i_U\Lambda,
$$
is a morphism between tangent and cotangent Lie algebroids (see
\cite{Mac-Xu}) if and only if $\sigma$ is an IM 2-form relative to
$\phi$. This result can be immediately applied to the integration of
IM 2-forms: the morphism of groupoids $T\Grd\rmap T^*\Grd$ obtained
by integrating the morphism $\Lambda^\sharp:TA\rmap T^*A$ determines
the desired multiplicative 2-form. Our approach to multiplicative
2-forms can be naturally extended in different directions, e.g., to
forms of higher degree or forms with no
prescription on their exterior derivatives, as
recently done in \cite{AC} from a different perspective. These extensions
and a comparison with \cite{AC} will be discussed in a
separate paper.

The paper is organized as follows. In Section \ref{sec:tancot} we
briefly recall the definitions and main properties of tangent and
cotangent Lie algebroids and groupoids. In Section
\ref{sec:liftlie}, we discuss the construction of linear 2-forms on
Lie algebroids associated with multiplicative 2-forms on Lie
groupoids. In Section \ref{sec:mult}, we relate IM 2-forms with
linear 2-forms defining algebroid morphisms $TA\rmap T^*A$, and
apply our results to the integration of IM 2-forms.

\subsection{Notations and conventions}
For a Lie groupoid $\Grd$ over $M$, its source and target maps are
denoted by $\sour$, $\tar$. Composable pairs $(g,h)\in
\Grd^{(2)}=\Grd\times_M \Grd$ are such that $\sour(g)=\tar(h)$, and
the multiplication map is denoted by $m:\Grd^{(2)}\to \Grd$,
$m(g,h)=gh$. Its Lie algebroid is $A\Grd=\Ker(Ts)|_M$, with anchor
$T\tar|_A:A\rmap TM$, and bracket induced by right-invariant vector
fields. For a general Lie algebroid $A\rmap M$, we denote its anchor
by $\rho_A$ and bracket by $[\cdot,\cdot]_A$ (or simply $\rho$ and
$[\cdot,\cdot]$ if there is no risk of confusion). Given vector
bundles $A\to M$ and $B\to M$, \textit{vector-bundle maps} $A\to B$
in this paper are assumed to cover the identity map, unless
otherwise stated. Einstein's summation convention is consistently
used throughout the paper.

\subsection{Acknowledgements}
Bursztyn and Cabrera thank CNPq (the Brazilian National Research
Council) for financial support. Ortiz was supported by a PEC-PG
scholarship from CAPES. We thank Yvette Kosmann-Schwarzbach and the
referees for their valuable comments.

\section{Tangent and cotangent structures}\label{sec:tancot}

In this section, we briefly recall tangent and cotangent algebroids
and groupoids, following \cite{Mac-book,Mac-Xu}, where readers can
find more details.

\subsection{Tangent and cotangent Lie groupoids}

Let $\Grd$ be a Lie groupoid over $M$, with Lie algebroid $A\Grd$
(if there is no risk of confusion, we may denote $A\Grd$ simply by
$A$). The tangent bundle $T\Grd$ has a natural Lie groupoid
structure over $TM$, with source (resp., target) map given by
$T\sour:T\Grd\rmap TM$ (resp., $T\tar:T\Grd\rmap TM$). The
multiplication on $T\Grd$ is defined by $Tm : T\Grd^{(2)} =
(T\Grd)^{(2)} \rmap T\Grd$. We refer to this groupoid as the
\textbf{tangent groupoid} of $\Grd$.

The cotangent bundle $T^*\Grd$ has a Lie groupoid structure over
$A^*$, known as the \textbf{cotangent groupoid} of $\Grd$. The
source and target maps are given by
$$
\tilde{\sour}(\alpha_g)u=\alpha_g(Tl_g(u-T\tar(u))),\qquad
\tilde{\tar}(\beta_g)v=\beta_g(Tr_g(v)),
$$
where $\alpha_g, \beta_g \in T^*_g\Grd$, $u\in A_{s(g)}$, and $v\in
A_{t(g)}$. Here $l_g: \tar^{-1}(\sour(g))\rmap \tar^{-1}(\tar(g))$
and $r_g: \sour^{-1}(\tar(g))\rmap \sour^{-1}(\sour(g))$ denote the
left and right multiplications by $g\in \Grd$, respectively. The
multiplication on $T^*\Grd$, denoted by $\circ$, is defined by
\begin{equation}\label{eq:cotgmult}
\alpha_g\circ \beta_h(Tm(X_g,Y_h))= \alpha_g(X_g)+ \beta_h(Y_h),
\end{equation}
for $(X_g,Y_h)\in T_{(g,h)}\Grd^{(2)}$.

\subsection{Tangent double vector bundles and duals}
\label{subsec:DVB}

Let $q_A: A \rmap M$ be a vector bundle. There is a natural
\textit{double vector bundle} \cite{Mac-book,Prad} associated with
it, referred to as the \textbf{tangent double vector bundle} of $A$,
and defined by the following diagram:
\begin{equation}\label{eq:doubletangent}
\xymatrix{ TA  \ar[r]^{T{q_A}} \ar[d]_{p_A} & TM \ar[d]^{p_M} \\
A \ar[r]_{q_A} &  M .}
\end{equation}
Here the vertical arrows are the usual tangent bundle structures.
Similarly, one can consider the tangent double vector bundle of
$q_{A^*}: A^* {\rmap} M$, which defines a double vector bundle
$TA^*$:
\begin{equation}\label{eq:doublecotangent}
\xymatrix{ TA^*  \ar[r]^{T{q_{A^*}}} \ar[d]_{p_{A^*}} & TM \ar[d]^{p_M} \\
A^* \ar[r]_{q_{A^*}} &  M .}
\end{equation}

It will be useful to consider coordinates on these bundles. If
$(x^j)$, $j=1,\ldots,\mbox{dim}(M)$, are local coordinates on $M$
and $\{e_d\}$, $d=1,\ldots,\mbox{rank}(A)$, is a basis of local
sections of $A$, we write the corresponding coordinates on $A$ as
$(x^j,u^d)$ and tangent coordinates on $TA$ as
$(x^j,u^d,\dot{x}^j,\dot{u}^d)$. For each $x=(x^j)$, note that
$(u^d)$ specifies a point in $A_x$, $(\dot{x}^j)$ gives a point in
$T_xM$, whereas $(\dot{u}^d)$ determines a point on a second copy of
$A_x$, tangent to the fibres of $A\rmap M$, known as the
\textit{core} of $TA$ (defined by $\mbox{ker}(p_A)\cap
\mbox{ker}(Tq_A)$, see \cite{Mac-book,Prad}). Similarly, we have
local coordinates $(x^j,\xi_d)$ on $A^*$ (relative to the basis
$\{e^d\}$, dual to $\{e_d\}$), and tangent coordinates
$(x^j,\xi_d,\dot{x}^j,\dot{\xi}_d)$, where now the coordinates
$(\dot{\xi}_d)$ represent the core directions.

Let $T^\bullet A\rmap TM$ be the vector bundle defined by dualizing
the fibres of $Tq_A: TA \rmap TM$,
$(x^j,u^d,\dot{x}^j,\dot{u}^d)\mapsto (x^j,\dot{x}^j)$. This fits
into the double vector bundle
\begin{equation}\label{eq:TMdualtangentprolongation}
\xymatrix{ T^\bullet A  \ar[r] \ar[d] & TM \ar[d]^{p_M} \\
A^* \ar[r]_{q_{A^*}} &  M .}
\end{equation}
Here the vertical map $T^\bullet A\to A^*$ is defined by
$(x^j,\zeta_d,\dot{x}^j,\eta_d)\mapsto (x^j,\eta_d)$, where
$T^\bullet A$ is locally written as
$(x^j,\zeta_d,\dot{x}^j,\eta_d)$, with $(\zeta_d)$ dual to $(u^d)$,
and $(\eta_d)$ dual to $(\dot{u}^d)$.

The double vector bundles \eqref{eq:doublecotangent} and
\eqref{eq:TMdualtangentprolongation} turn out to be isomorphic: as
shown in \cite[Proposition~5.3]{Mac-Xu}, by applying the tangent
functor to the natural pairing $A^*\times_{M} A\rmap \reals$
(followed by the fibre projection $T\mathbb{R}\to \mathbb{R}$) one
obtains a nondegenerate pairing $TA^*\times_{TM} TA\rmap  \reals$,
which induces an isomorphism of double vector bundles
\begin{equation}\label{eq:I}
I: TA^* \rmap T^\bullet A .
\end{equation}
Locally, this identification amounts to the flip
$$
(x^j,\xi_d,\dot{x}^j,\dot{\xi}_d)\mapsto
(x^j,\dot{\xi}_d,\dot{x}^j,{\xi}_d).
$$

The cotangent bundle $T^*A$ can be locally written in coordinates
$(x^j,u^d,p_j,\zeta_d)$, where $(p_j)$ determines a point in
$T^*_xM$ and $\zeta_d$ in $A^*_x$ (dual to the direction tangent to
the fibres $A\rmap M$). If $c_A: T^*A\rmap A$,
$c_A(x^j,u^d,p_j,\zeta_d)=(x^j,u^d)$ denotes the natural projection,
we see that $T^*A$ fits into the following double vector bundle:
\begin{equation}\label{eq:dualtangentprolongation}
\xymatrix{ T^* A  \ar[r]^r \ar[d]_{c_A} & A^* \ar[d]^{q_{A^*}} \\
A \ar[r]_{q_{A}} &  M ,}
\end{equation}
where the bundle projection $r:T^*A\rmap A^*$ is given locally by
$r(x^j,u^d,p_j,\zeta_d)=(x^j,\zeta_d)$. The same construction can be
applied to the vector bundle $A^*\rmap M$, yielding a double vector
bundle structure for $T^* A^*$. These double vector bundles can be
identified by a Legendre type transform \cite[Thm.~5.5]{Mac-Xu} (cf.
\cite{tulc}):
\begin{equation}\label{eq:R}
R: T^*A^*\rmap T^*A,
\end{equation}
given locally by $(x^j,\xi_d,p_j,u^d)\mapsto (x^j,u^d,-p_j,\xi_d)$.

There are two other identifications involving tangent and cotangent
double vector bundles that we need to recall. For an arbitrary
manifold $M$, we first have the \textit{canonical involution}
\begin{equation}\label{eq:canonicalinvolution}
\xymatrix{ T T M  \ar[r]^{J_M} \ar[d]_{p_{TM}} & TTM \ar[d]^{Tp_M} \\
TM \ar[r]_{\Id} &  TM ,}
\end{equation}
which is an isomorphism of double vector bundles (restricting to the
identity on side bundles and cores). Writing local coordinates
$(x^j,\dot{x}^j)$ for $TM$, and tangent coordinates
$(x^j,\dot{x}^j,\delta x^j, \delta \dot{x}^j)$ for $T(TM)$, $J_M$ is
given by
$$
J_M(x^j,\dot{x}^j,\delta x^j, \delta \dot{x}^j) = (x^j,\delta x^j,
\dot{x}^j, \delta \dot{x}^j).
$$
There is also an isomorphism of double vector bundles (also
restricting to the identity on side bundles and cores),
\begin{equation}\label{eq:flipTT*}
\Theta_M: T T^* M \rmap T^* T M,
\end{equation}
defined in local coordinates by
$$
\Theta_M(x^j,p_j,\dot{x}^j, \dot{p}_j)=
(x^j,\dot{x}^j,\dot{p}_j,p_j).
$$
Here $(x^j,p_j)$ are cotangent coordinates on $T^*M$. Equivalently,
$\Theta_M = J_M^*\circ I_M$, where $J_M^*: T^\bullet TM \rmap T^*TM$
is the dual of \eqref{eq:canonicalinvolution}, and
\begin{equation}\label{eq:I_M}
I_M: TT^*M \rmap T^\bullet TM
\end{equation}
is as in \eqref{eq:I} (with $A=TM$).

\subsection{Tangent and cotangent Lie
algebroids}\label{subsec:tancot}

Suppose that the vector bundle $A\rmap M$ carries a Lie algebroid
structure, which can be equivalently described by a fibrewise linear
Poisson structure on $A^*$ (see, e.g., \cite[Sec.~16.5]{CW}). Since
any Poisson structure on a manifold defines a Lie algebroid
structure on its cotangent bundle (see, e.g., \cite[Sec.~17.3]{CW}),
we obtain a Lie algebroid structure on $T^*A^*$; it follows that
$TA^*$ inherits a Poisson structure, which turns out to be linear
with respect to \textit{both} vector bundle structures on $TA^*$
\eqref{eq:doublecotangent}. Hence the vector bundle $T^\bullet A^*
\rmap TM$, dual to $TA^* \rmap TM$, is a Lie algebroid. Using the
identification $T^\bullet A^* \cong TA$ as in \eqref{eq:I}, we
obtain a Lie algebroid structure on $TA\rmap TM$, referred to as the
\textbf{tangent Lie algebroid} of $A$.

To describe this algebroid structure more explicitly, we recall that
any section $u\in \Gamma(A)$ gives rise to two types of sections on
$TA$: the first one is just $Tu: TM \to TA$, and the second one,
denoted by $\widehat{u}$, identifies $u$ at each point with a core
element in $TA$; locally, using coordinates $(x^j,u^d)$ for $A$ and
$(x^j,u^d,\dot{x}^j,\dot{u}^d)$ for $TA$, $\widehat{u}:TM\to TA$ is
defined by
\begin{equation}\label{eq:coreT}
\widehat{u}(x^j,\dot{x}^j) = (x^j, 0, \dot{x}^j, u^d(x)).
\end{equation}
These two types of sections generate the space of sections of
$TA\rmap TM$. The Lie algebroid structure on $TA$ is completely
described in terms of these sections by the relations \cite{Mac-Xu}:
\begin{equation}\label{eq:TArel}
[\widehat{u},\widehat{v}]_{\st{TA}}=0, \;\;
[Tu,\widehat{v}]_{\st{TA}}=\widehat{[u,v]}_{\st{A}},\;\;
[Tu,Tv]_{\st{TA}}=T [u,v]_{\st{A}},
\end{equation}
for $u,v \in \Gamma(A)$; the anchor map is $\rho_{TA}= J_M\circ
T\rho_A$, where $J_M:T(TM)\rmap T(TM)$ is as in
\eqref{eq:canonicalinvolution}.

On the other hand, since $T^* A^* \to A^*$ is a Lie algebroid
(defined by the linear Poisson structure on $A^*$), one can induce a
Lie algebroid structure on $r: T^*A \to A^*$ using the
identification \eqref{eq:R}. This is known as the \textbf{cotangent
Lie algebroid} of $A$. Explicit formulas for its bracket and anchor
will be recalled in Section \ref{subsec:IMmorphism}.

Suppose that $A=A\Grd$ is the Lie algebroid of a Lie groupoid
$\Grd$, and consider the natural inclusion $\iota_{A\Grd}:A\Grd\rmap
T\Grd$, which is a bundle map over the unit map $M\hookrightarrow
\Grd$. Then the canonical involution $J_\Grd:T(T\Grd)\rmap T(T\Grd)$
\eqref{eq:canonicalinvolution} restricts to a Lie algebroid
isomorphism
\begin{equation}\label{eq:j}
j_{\Grd}:T(A\Grd)\rmap A(T\Grd).
\end{equation}
In other words, we have a commutative diagram
\begin{equation}
\xymatrix{ T (A\Grd)  \ar[r]^{j_{\Grd}} \ar[d]_{T\iota_{A\Grd}} & A(T\Grd) \ar[d]^{\iota_{A(T\Grd)}} \\
T(T\Grd) \ar[r]_{J_\Grd} &  T(T\Grd) .}
\end{equation}
The canonical pairing $T^*\Grd\times_{\Grd}T\Grd\rmap \reals$ is a
morphism of groupoids, and applying the Lie functor one obtains a
nondegenerate pairing $A(T^*\Grd)\times_{A\Grd}A(T\Grd)\rmap
\reals$, explicitly given by
$$
\SP{U,V} = \SP{I_\Grd(\iota_{A(T^*\Grd)}(U)),\iota_{A(T\Grd)}(V)},
$$
where $U \in A(T^*\Grd)$, $V \in A(T\Grd)$, and $I_\Grd$ is as in
\eqref{eq:I_M}. This induces an isomorphism $A(T^*\Grd)\rmap
A^\bullet(T\Grd)$, where $A^\bullet(T\Grd)$ is obtained by dualizing
the fibres of $A(T\Grd)\rmap A(\Grd)$, and the composition of this
map with $j^*_{\Grd}: A^\bullet(T\Grd) \rmap T^*(A\Grd)$ defines a
Lie algebroid isomorphism
\begin{equation}\label{eq:thetag}
\theta_{\Grd}:A(T^*\Grd)\rmap T^*(A\Grd).
\end{equation}
Alternatively, one can check that $\theta_\Grd =
(T\iota_{A\Grd})^t\circ \Theta_{\Grd}\circ \iota_{A(T^*\Grd)}$,
where $(T\iota_{A\Grd})^t:\iota_{A\Grd}^*T^*(T\Grd)\rmap T^*(A\Grd)$
is dual to the tangent map $T\iota_{A\Grd}: T(A\Grd)\rmap
\iota_{A\Grd}^*T(T\Grd)$.

\section{Tangent lifts and the Lie functor}\label{sec:liftlie}

We now discuss how multiplicative forms on Lie groupoids relate to
differential forms on Lie algebroids. As a first step, we need to
recall a natural operation that lifts differential forms on a
manifold to its tangent bundle,
\begin{equation}
\Omega^k(M) \rmap \Omega^k(TM), \;\; \alpha \mapsto \alpha_T,
\end{equation}
known as the \textbf{tangent (or complete) lift}, see \cite{GU,YI}.

\subsection{Tangent lifts of differential forms}
The properties of tangent lifts recalled in this subsection can be
found (often in more generality) in \cite{GU}; we included the
proofs of some key facts for the sake of completeness.

Given the tangent bundle $p_M:TM\rmap M$, $(x^j,\dot{x}^j)\mapsto
(x^j)$, consider the two vector bundle structures associated with
$T(TM)$:
\begin{equation}
\xymatrix{ T (TM)  \ar[r]^{Tp_M} \ar[d]_{p_{TM}} & TM \\
TM,  & }
\end{equation}
where $p_{TM}(x^j,\dot{x}^j,\delta x^j, \delta
\dot{x}^j)=(x^j,\dot{x}^j)$ and $Tp_M(x^j,\dot{x}^j,\delta x^j,
\delta \dot{x}^j)=(x^j,\delta x^j)$. We use the notation
$$
T(TM)\times_{\st{Tp_M}} T(TM), \;\;\; T(TM)\times_{\st{p_{TM}}}
T(TM),
$$
to specify the vector bundle structure used for fibre products over
$TM$; more general $k$-fold fibre products over $TM$ are denoted by
$$
{\prod_{T{p_M}}^k}T(TM), \;\;\; {\prod_{p_{TM}}^k}T(TM).
$$
Using the involution \eqref{eq:canonicalinvolution}, given by
$J_M(x^j,\dot{x}^j,\delta x^j, \delta \dot{x}^j) = (x^j,\delta
x^j,\dot{x}^j, \delta \dot{x}^j)$ in local coordinates, we obtain a
natural isomorphism
\begin{equation}
J_M^{(k)}: {\prod_{p_{TM}}^k} T(TM) \rmap {\prod_{Tp_M}^k}T(TM).
\end{equation}

Given a $k$-form $\alpha \in \Omega^k(M)$, $k\geq 1$, consider the
bundle map
\begin{equation}\label{eq:sharp}
\alpha^\sharp: \prod_{p_M}^{k-1}TM\rmap T^*M, \;\;
\alpha^\sharp(X_1,\ldots,X_{k-1}) = i_{X_{k-1}}\ldots i_{X_1}\alpha.
\end{equation}
(For $k=1$, $\alpha^\sharp: M \rmap T^*M$ is just $\alpha$ viewed as
a section of $T^*M$.) Using the natural identification $T
(\prod^k_{p_M} TM)=\prod^k_{Tp_M} T(TM)$,
we consider the tangent map
$$
T\alpha^\sharp: \prod_{Tp_M}^{k-1}T(TM) \rmap T(T^*M).
$$
The \textbf{tangent (or complete) lift} of a $k$-form on $M$ is
defined as follows (cf. \cite{YI}):
\begin{itemize}
\item If $f\in \Omega^0(M)=C^\infty(M)$, then $f_T\in C^\infty(TM)$ is the fibrewise linear function
on $TM$ defined by $\mathrm{d}f$,
$$
f_T(X)=(\mathrm{d}f)_{p_M(X)}(X), \;\; X\in TM.
$$

\item If $\alpha \in \Omega^k(M)$, $k\geq 1$,
we define
$$
(\alpha_T)^\sharp: \prod^{k-1}_{p_{TM}} T(TM) \rmap T^*(TM), \;\;\;
(\alpha_T)^\sharp:= \Theta_M \circ T\alpha^\sharp \circ J_M^{(k-1)},
$$
and then $\alpha_T\in \Omega^k(TM)$ is given by
$$
\alpha_T(U_1,\ldots,U_k):=\SP{\alpha_T^\sharp(U_1,\ldots,U_{k-1}),U_k}.
$$
\end{itemize}

One can directly verify that $\alpha_T$ is multilinear. The fact
that it is indeed a $k$-form on $TM$ follows from the next lemma
(cf. \cite{GU,YI}).

\begin{lemma}\label{lem:properties}
The following holds:
\begin{enumerate}
\item[$(i)$] For $f\in C^\infty(M)$, $\mathrm{d} f_T = (\mathrm{d}f)_T$.

\item[$(ii)$] For $f\in C^\infty(M)$, $\alpha \in \Omega^k(M)$,
\begin{equation*}
(f\alpha)_T= f_T \alpha^\vee + f^\vee \alpha_T,
\end{equation*}
where $\beta^\vee = p_M^*\beta$ for any $\beta\in \Omega^l(M)$.

\item[$(iii)$] For $k\geq 2$, the tangent lift $(\mathrm{d}x^{i_1}\wedge \ldots \wedge
\mathrm{d}x^{i_k})_T$ equals
$$
\sum_{m=1}^k (\mathrm{d}x^{i_1})^\vee\wedge \ldots \wedge
(\mathrm{d}x^{i_{m-1}})^\vee\wedge (\mathrm{d}x^{i_m})_T\wedge
(\mathrm{d}x^{i_{m+1}})^\vee\wedge \ldots\wedge (\mathrm{d}x^{i_k})^\vee.
$$
(Whenever there is no risk of confusion, we write $(\mathrm{d}x^j)^\vee$
simply as $\mathrm{d}x^j$.)

\end{enumerate}
\end{lemma}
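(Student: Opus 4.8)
The plan is to reduce all three statements to a single local computation, since the tangent lift is a natural (hence local) construction and all assertions are coordinate statements on $TM$. The engine is an explicit coordinate formula for $\alpha_T$, obtained by unwinding the defining relation $\alpha_T^\sharp = \Theta_M\circ T\alpha^\sharp\circ J_M^{(k-1)}$ using the coordinate descriptions of $J_M$ and $\Theta_M$ in \eqref{eq:canonicalinvolution} and \eqref{eq:flipTT*}. First I would treat a $1$-form $\alpha = \alpha_j\,\mathrm{d}x^j$: here $\alpha^\sharp = \alpha$, and a direct application of $T$ followed by $\Theta_M$ gives $\alpha_T = \dot{x}^i(\partial_i\alpha_j)\,\mathrm{d}x^j + \alpha_j\,\mathrm{d}\dot{x}^j$. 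Carrying the same computation out for a general $\alpha = \tfrac1{k!}\alpha_{j_1\cdots j_k}\,\mathrm{d}x^{j_1}\wedge\cdots\wedge\mathrm{d}x^{j_k}$ should yield
\[
\alpha_T = \tfrac{1}{k!}\Big[\dot{x}^i(\partial_i\alpha_{j_1\cdots j_k})\,\mathrm{d}x^{j_1}\wedge\cdots\wedge\mathrm{d}x^{j_k}
+ \alpha_{j_1\cdots j_k}\sum_{m=1}^{k}\mathrm{d}x^{j_1}\wedge\cdots\wedge\mathrm{d}\dot{x}^{j_m}\wedge\cdots\wedge\mathrm{d}x^{j_k}\Big],
\]
where in the $m$-th summand the factor $\mathrm{d}x^{j_m}$ is replaced by $\mathrm{d}\dot{x}^{j_m}$. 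This expression is a sum of wedge products of $1$-forms on $TM$, hence visibly alternating, which already confirms that $\alpha_T$ is a genuine $k$-form; the three properties then drop out by inspection.

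Concretely, for $(i)$ I would note that $f_T = \dot{x}^j\partial_j f$, so $\mathrm{d}f_T = \dot{x}^i(\partial_i\partial_j f)\,\mathrm{d}x^j + (\partial_j f)\,\mathrm{d}\dot{x}^j$; specializing the $1$-form case above to $\alpha = \mathrm{d}f$ (i.e.\ $\alpha_j = \partial_j f$) gives exactly this, using only that mixed partials commute. For $(ii)$, replacing $\alpha_{j_1\cdots j_k}$ by $f\alpha_{j_1\cdots j_k}$ in the general formula and applying the product rule to $\partial_i(f\alpha_{j_1\cdots j_k})$ splits the coefficient-derivative term into $(\dot{x}^i\partial_i f)$ times $\alpha^\vee$ plus $f$ times the original $\partial_i\alpha$-term; recognizing $\dot{x}^i\partial_i f = f_T$, reading the coefficient $f$ as $f^\vee$, and observing that the $\mathrm{d}\dot{x}^{j_m}$-sum multiplied by $f$ completes $f^\vee\alpha_T$, yields $(f\alpha)_T = f_T\alpha^\vee + f^\vee\alpha_T$. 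For $(iii)$, the coefficients of $\mathrm{d}x^{i_1}\wedge\cdots\wedge\mathrm{d}x^{i_k}$ are constant, so the coefficient-derivative term vanishes and only the Leibniz sum of $\mathrm{d}\dot{x}$-substitutions survives; I would then invoke $(i)$ with $f = x^{i_m}$ to identify $\mathrm{d}\dot{x}^{i_m} = \mathrm{d}\big((x^{i_m})_T\big) = (\mathrm{d}x^{i_m})_T$, together with $\mathrm{d}x^{i_l} = (\mathrm{d}x^{i_l})^\vee$, turning the sum into the claimed expression.

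The step I expect to be the main obstacle is establishing the general coordinate formula itself, as it requires careful bookkeeping of the per-factor flip $J_M^{(k-1)}$ and of the signs produced by the iterated contraction $\alpha^\sharp = i_{X_{k-1}}\cdots i_{X_1}\alpha$ \eqref{eq:sharp} against the antisymmetric coefficients, followed by the final pairing against $U_k$ through $\Theta_M$. The conceptual content is that $T\alpha^\sharp$ contributes exactly one velocity derivative per argument: differentiating $\alpha_{j_1\cdots j_k}(x)$ along $\dot{x}$ produces the first term, while differentiating each of the $k-1$ contracted slots, together with the pairing slot supplied by $\Theta_M$, produces the $k$ summands of the Leibniz sum. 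If the combinatorics for general $k$ become unwieldy, I would instead prove $(i)$ directly in coordinates as above, deduce $(ii)$ from the fibrewise multiplicativity $(f\alpha)^\sharp = f\,\alpha^\sharp$ by computing how $\Theta_M\circ T(\cdot)$ transforms under scalar multiplication in $T^*M$ (which, in the coordinates of \eqref{eq:flipTT*}, sends the covector components $(a_j,b_j)$ to $(f a_j + f_T b_j,\, f b_j)$), and prove $(iii)$ directly from the constancy of the coefficients, which forces $T\alpha^\sharp$ to have no coefficient-derivative term and isolates precisely the $k$ Leibniz summands without invoking the full formula.
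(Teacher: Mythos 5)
Your proposal is correct, but it reorganizes the argument around a single master coordinate formula rather than the paper's item-by-item treatment. You derive, directly from the definition $(\alpha_T)^\sharp=\Theta_M\circ T\alpha^\sharp\circ J_M^{(k-1)}$, the local expression
\[
\alpha_T=\tfrac{1}{k!}\Bigl(\dot{x}^i\tfrac{\partial \alpha_{j_1\cdots j_k}}{\partial x^i}\,\mathrm{d}x^{j_1}\wedge\cdots\wedge \mathrm{d}x^{j_k}+\alpha_{j_1\cdots j_k}\sum_{m=1}^{k}\mathrm{d}x^{j_1}\wedge\cdots\wedge \mathrm{d}\dot{x}^{j_m}\wedge\cdots\wedge \mathrm{d}x^{j_k}\Bigr),
\]
and then read off $(i)$--$(iii)$ by inspection. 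The paper instead proves $(i)$ and $(iii)$ by direct coordinate computation (these are your master formula in the special cases $k=1$ and constant coefficients, respectively) and proves $(ii)$ more structurally, via the Leibniz rule for $T$ applied to $f\alpha^\sharp$ combined with the fact that $\Theta_M$ is a double-vector-bundle isomorphism preserving side bundles and cores --- which is precisely your stated fallback for $(ii)$, so nothing in your plan is actually missing. The trade-off is that your route front-loads all the combinatorial bookkeeping (the permutation sums, the flips $J_M^{(k-1)}$, the final pairing through $\Theta_M$) into one computation --- essentially the computation the paper performs for item $(iii)$, extended to non-constant coefficients --- and in exchange it yields at once the alternating character of $\alpha_T$ and the identity $\alpha_T=\Lie_V\,p_M^*\alpha$ (with $V=\dot{x}^i\partial/\partial x^i$), which the paper only extracts afterwards, in the proof of Proposition~\ref{prop:magic}, as a consequence of this lemma. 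The one point to be careful about: since the definition produces $\alpha_T$ only as a multilinear (not manifestly alternating) object, your master formula must be verified as an equality of multilinear maps on arbitrary tuples $(U_1,\dots,U_k)$, exactly as the paper does in its proof of $(iii)$.
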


\begin{proof}
To verify $(i)$, let us consider $X\in TM$ and $U \in T_X(TM)$. In
local coordinates, we write $X=(x^j,\dot{x}^j)$ and
$U=(x^j,\dot{x}^j,\delta x^j, \delta \dot{x}^j)$. Then
$f_T(X)=\frac{\partial f}{\partial x^i} \dot{x}^i$, and
\begin{equation}\label{eq:df}
\mathrm{d}(f_T)_X(U)= \frac{\partial^2f}{\partial x^j \partial x^i}\dot{x}^i
\delta x^j + \frac{\partial f}{\partial x^j}\delta \dot{x}^j.
\end{equation}
On the other hand, we may view $\mathrm{d}f$ as a section
$$
(\mathrm{d}f)^\sharp: M\rmap T^*M, \;\;\; x=(x^j) \mapsto (x^j,
\frac{\partial f}{\partial x^j}).
$$
Hence $T(\mathrm{d}f)^\sharp: TM \rmap T(T^*M)$ is given by
$$
T(\mathrm{d}f)^\sharp(x^j,\dot{x}^j)=(x^j,\frac{\partial f}{\partial
x^j},\dot{x}^j,\frac{\partial^2 f}{\partial x^i\partial x^j}
\dot{x}^i),
$$
and, as a consequence,
$$
(\mathrm{d}f)_T(x^j,\dot{x}^j)=\Theta_M(T(\mathrm{d}f)^\sharp(x^j,\dot{x}^j))=(x^j,\dot{x}^j,\frac{\partial^2
f}{\partial x^i\partial x^j} \dot{x}^i,\frac{\partial f}{\partial
x^j}).
$$
It immediately follows that $((\mathrm{d}f)_T)_X(U)$ agrees with
\eqref{eq:df}.

Let us show that $(ii)$ holds for $k> 1$ (the cases $k=0,1$ are
simpler). One can directly check that $(f \alpha)^\sharp = f
\alpha^\sharp$ and
$$
T(f\alpha)^\sharp(U_1,\ldots,U_{k-1})=
\alpha^\sharp(X_1,\ldots,X_{k-1}) (\mathrm{d}f)_x(Y) + f(x)
T\alpha^\sharp(U_1,\ldots,U_{k-1}),
$$
where $X_i = p_{TM}(U_i) \in T_xM$, and $Y= (p_M)_*(U_1)=\ldots=
(p_M)_*(U_{k-1})$. In the last formula, addition and multiplication
by scalars are with respect to the vector bundle structure
$T(T^*M)\rmap TM$ (in the fibre over $Y\in T_xM$), and
$\alpha^\sharp(X_1,\ldots,X_{k-1}) \in T^*_xM$ is viewed inside
$T(T^*M)$ as the core (i.e., tangent to $T^*M$-fibres). Since
$\Theta_M: T(T^*M)\rmap T^*(TM)$ is a double vector bundle
isomorphism restricting to the identity on side bundles and cores,
we have
\begin{align}
\Theta_M T(f\alpha)^\sharp(U_1,\ldots,U_{k-1})= \label{eq:leib}&
\alpha^\sharp(X_1,\ldots,X_{k-1}) (\mathrm{d}f)_x(Y)\\
& + f(x) \Theta_M T\alpha^\sharp(U_1,\ldots,U_{k-1}), \nonumber
\end{align}
where now the addition and scalar multiplication operations are
relative to the vector bundle $T^*(TM)\rmap TM$, and
$\alpha^\sharp(X_1,\ldots,X_{k-1})$ belongs to the core fibre in
$T^*(TM)$ (i.e., cotangent to $M$). Writing $(U_1,\ldots,U_{k-1}) =
J_M^{(k-1)}(V_1,\dots,V_{k-1})$, then $X_i=(p_M)_* (V_i)$ and
$Y=p_{TM} (V_i)$, so \eqref{eq:leib} yields
$$
(f\alpha)_T^\sharp = (f_T \alpha^\vee + f^\vee \alpha_T)^\sharp.
$$

Let us now prove $(iii)$. Note that
$$
(\mathrm{d}x^{i_1}\wedge \ldots \wedge \mathrm{d}x^{i_k})^\sharp(X_1,\dots,X_{k-1})=
\sum_{\sigma \in S_k} (-1)^\sigma \dot{x}_1^{i_{\sigma(1)}}\ldots
\dot{x}_{k-1}^{i_{\sigma(k-1)}}\mathrm{d}x^{i_{\sigma(k)}},
$$
where $X_l=(x^j,\dot{x}_l^j)\in T_xM$. Then
$\SP{\Theta_M(T(\mathrm{d}x^{i_1}\wedge\ldots \wedge
\mathrm{d}x^{i_k})^\sharp(U_1,\ldots,U_{k-1})),V_k}$ equals
\begin{equation}\label{eq:expression}
\sum_{\sigma \in S_k} (-1)^\sigma
\sum_{n=1}^k\dot{x}_1^{i_{\sigma(1)}}\ldots
\dot{x}_{n-1}^{i_{\sigma(n-1)}} (\delta \dot{x})_n^{i_{\sigma(n)}}
\dot{x}_{n+1}^{i_{\sigma(n+1)}} \ldots \dot{x}_k^{i_{\sigma(k)}},
\end{equation}
where $U_l=(x^j,\dot{x}_l^j,(\delta x)^j,(\delta \dot{x})_l^j)$ and
$V_k=(x^j,(\delta x)^j,\dot{x}_k^j,(\delta \dot{x})_k^j)$. Since
$(\mathrm{d}x^j)_T=\mathrm{d}\dot{x}^j$ (by $(i)$), one checks that
$$
\sum_{n=1}^k (\mathrm{d}x^{i_1})^\vee\wedge \ldots \wedge
(\mathrm{d}x^{i_{n-1}})^\vee\wedge (\mathrm{d}x^{i_n})_T\wedge
(\mathrm{d}x^{i_{n+1}})^\vee\wedge \ldots\wedge
(\mathrm{d}x^{i_k})^\vee(V_1,\ldots,V_k),
$$
where $V_l=(x^j,(\delta x)^j,\dot{x}_l^j,(\delta \dot{x})_l^j)$ (so
that $J_M(V_l)=U_l$), equals
$$
\sum_{\sigma \in S_k}(-1)^\sigma \sum_{n=1}^k
\dot{x}^{i_1}_{\sigma(1)}\ldots
\dot{x}^{i_{n-1}}_{\sigma(n-1)}(\delta \dot{x})^{i_n}_{\sigma(n)}
\dot{x}^{i_{n+1}}_{\sigma(n+1)}\ldots \dot{x}^{i_k}_{\sigma(k)},
$$
which agrees with \eqref{eq:expression} after reshuffling indices.


\end{proof}

Let us now consider the operation
\begin{equation}\label{eq:tau}
\tau: \Omega^k(M)\rmap \Omega^{k-1}(TM),\;\; \tau(\alpha)_X =
p_M^*(i_X\alpha),
\end{equation}
where $X \in TM$ and $k\geq 1$.
In other words, given $U_1,\ldots,U_{k-1}\in T_X(TM)$,
$$
\tau(\alpha)_X(U_1,\ldots,U_{k-1})=\alpha(X,(p_M)_*(U_1),\ldots,(p_M)_*(U_{k-1})).
$$
In coordinates, writing $\alpha = \frac{1}{k!}\alpha_{i_1\ldots
i_k}(x) \mathrm{d}x^{i_1}\wedge \ldots \wedge \mathrm{d}x^{i_k}$ (with
$\alpha_{i_1\ldots i_k}$ totally anti-symmetric), we have
$$
\tau(\alpha)_X = \frac{1}{(k-1)!}\alpha_{i_1\ldots i_k}(x) X^{i_1}
\mathrm{d}x^{i_2}\wedge \ldots \wedge \mathrm{d}x^{i_k}.
$$

\begin{example}\label{ex:can1form}
Consider the map $\omega^\sharp:TM\rmap T^*M$,
$\omega^\sharp(X)=i_X\omega$, associated with a 2-form $\omega \in
\Omega^2(M)$. A direct computation shows that
$$
\tau(\omega)=(\omega^\sharp)^* \theta_{can},
$$
where $\theta_{can}\in \Omega^1(T^*M)$ is the canonical 1-form,
$\theta_{can}=p_i\mathrm{d}x^i$.
\end{example}

The tangent lift can be computed by the following Cartan-like
formula (cf. \cite{GU}).

\begin{proposition}\label{prop:magic}
For $\alpha\in \Omega^k(M)$,  the tangent lift of $\alpha$ is given
by the formula
\begin{equation}\label{eq:magic}
\alpha_T = \mathrm{d}\tau(\alpha) + \tau(\mathrm{d}\alpha).
\end{equation}
\end{proposition}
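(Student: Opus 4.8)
The plan is to exploit that both sides of \eqref{eq:magic} are $\reals$-linear in $\alpha$ and locally defined, so that it suffices to verify the identity on monomials $\alpha = f\,\mathrm{d}x^{i_1}\wedge\cdots\wedge\mathrm{d}x^{i_k}$. Before doing so, I would record three elementary properties of the operator $\tau$ from \eqref{eq:tau}, all immediate from the pointwise formula $\tau(\alpha)_X=p_M^*(i_X\alpha)$. First, since $i_X$ is a derivation, so is $\tau$ ``over the pullback'': for $\eta\in\Omega^l(M)$ and $\beta\in\Omega^{k}(M)$,
$$
\tau(\eta\wedge\beta)=\tau(\eta)\wedge\beta^\vee+(-1)^l\,\eta^\vee\wedge\tau(\beta).
$$
Second, $\tau$ is linear over pullbacks of functions, $\tau(f\beta)=f^\vee\tau(\beta)$. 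Third, for a $1$-form $\mathrm{d}f$ one has $\tau(\mathrm{d}f)=f_T$, since $\tau(\mathrm{d}f)_X=i_X\mathrm{d}f=\mathrm{d}f(X)=f_T(X)$.

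The first step is the case of a closed monomial $\beta=\mathrm{d}x^{i_1}\wedge\cdots\wedge\mathrm{d}x^{i_k}$, where $\mathrm{d}\beta=0$ forces $\tau(\mathrm{d}\beta)=0$, so that \eqref{eq:magic} reduces to $\beta_T=\mathrm{d}\tau(\beta)$. I would compute, directly from the coordinate expression for $\tau$,
$$
\tau(\beta)=\sum_{m}(-1)^{m-1}\dot{x}^{i_m}\,\mathrm{d}x^{i_1}\wedge\cdots\widehat{\mathrm{d}x^{i_m}}\cdots\wedge\mathrm{d}x^{i_k},
$$
and then differentiate. Using that the $\mathrm{d}x^{i_l}$ are closed and that $(\mathrm{d}x^{j})_T=\mathrm{d}\dot{x}^{j}$ (a consequence of Lemma \ref{lem:properties}$(i)$, since $\dot{x}^j=(x^j)_T$), a sign count shows that each term of $\mathrm{d}\tau(\beta)$ matches the corresponding term in the expansion of $\beta_T$ provided by Lemma \ref{lem:properties}$(iii)$: the two factors of $(-1)^{m-1}$ --- one from $i_X$, one from moving $\mathrm{d}\dot{x}^{i_m}$ back into the $m$-th slot --- cancel, giving $\mathrm{d}\tau(\beta)=\beta_T$.

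The final step is to promote this to $\alpha=f\beta$. On the left, Lemma \ref{lem:properties}$(ii)$ gives $(f\beta)_T=f_T\beta^\vee+f^\vee\beta_T$. On the right, the second property of $\tau$ yields $\mathrm{d}\tau(f\beta)=(\mathrm{d}f)^\vee\wedge\tau(\beta)+f^\vee\mathrm{d}\tau(\beta)$, while $\mathrm{d}(f\beta)=\mathrm{d}f\wedge\beta$ combined with the derivation property and $\tau(\mathrm{d}f)=f_T$ gives $\tau(\mathrm{d}(f\beta))=f_T\beta^\vee-(\mathrm{d}f)^\vee\wedge\tau(\beta)$. Adding these two expressions, the terms $(\mathrm{d}f)^\vee\wedge\tau(\beta)$ cancel, and using $\mathrm{d}\tau(\beta)=\beta_T$ from the first step the right-hand side collapses to $f_T\beta^\vee+f^\vee\beta_T=(f\beta)_T$. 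The main obstacle is purely one of sign- and index-bookkeeping: one must track carefully the Koszul signs in the derivation rule for $\tau$ and in repositioning $\mathrm{d}\dot{x}^{i_m}$ inside the wedge products. The conceptual content, however, is the exact cancellation of the $(\mathrm{d}f)^\vee\wedge\tau(\beta)$ terms, which is precisely what makes $\mathrm{d}\tau+\tau\mathrm{d}$ reproduce the tangent lift.
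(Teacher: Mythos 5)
Your proof is correct, but it follows a different route from the paper's. The paper's proof introduces the vector field $V_X=\dot{x}^j\tfrac{\partial}{\partial x^j}$ on $TM$ and observes that $\tau(\beta)=i_Vp_M^*\beta$ and (via Lemma \ref{lem:properties}$(ii)$--$(iii)$) that $\alpha_T=\Lie_Vp_M^*\alpha$, so that \eqref{eq:magic} becomes literally Cartan's magic formula $\Lie_V=\mathrm{d}\,i_V+i_V\,\mathrm{d}$ applied to $p_M^*\alpha$; this explains in one stroke why the identity has a ``Cartan-like'' shape. You instead never introduce $V$: you establish the graded derivation property of $\tau$ over $p_M^*$, together with $\tau(f\beta)=f^\vee\tau(\beta)$ and $\tau(\mathrm{d}f)=f_T$, verify $\beta_T=\mathrm{d}\tau(\beta)$ directly on closed coordinate monomials (where the two Koszul signs $(-1)^{m-1}$ indeed cancel against Lemma \ref{lem:properties}$(iii)$), and then run the Leibniz bookkeeping for $\alpha=f\beta$, where the $(\mathrm{d}f)^\vee\wedge\tau(\beta)$ terms cancel exactly as you say. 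Both arguments rest on the same local computations and on parts $(ii)$ and $(iii)$ of Lemma \ref{lem:properties}; what the paper's version buys is conceptual economy (the identity is an instance of a standard formula for the single vector field $V$), while yours buys independence from that observation at the cost of carrying the signs by hand --- in effect you re-derive the relevant case of Cartan's formula for the pair $(i_V,\Lie_V)$ without naming it. One minor point worth making explicit: your monomial computation as written covers $k\geq 1$ (Lemma \ref{lem:properties}$(iii)$ is stated for $k\geq 2$, and the $k=1$ case is just $(\mathrm{d}x^j)_T=\mathrm{d}\dot{x}^j$), which is consistent with the domain of $\tau$ in \eqref{eq:tau}; no gap, but the reader should see that the $k=1$ base case is being handled by part $(i)$ of the lemma rather than part $(iii)$.
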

\begin{proof}
It suffices to check \eqref{eq:magic} locally, so we replace $M$ by
a neighborhood with coordinates $(x^j)$, so that $TM$ has
coordinates $(x^j,\dot{x}^j)$. Let us consider the vector field $V$
on $TM$ defined by
$$
V_X := \dot{x}^j\frac{\partial}{\partial x^j} \; \in T_X(TM),
$$
where $X=(x^j,\dot{x}^j) \in TM$. This vector field has the property
that $Tp_M(V_X)=X$. One can directly check that
\begin{equation}\label{eq:V}
f_T = \Lie_V (p_M^* f),\;\;\mbox{ and } \;\;  (\mathrm{d}x^j)_T = \mathrm{d}\dot{x}^j
= \Lie_V (p_M^* \mathrm{d}x^j),
\end{equation}
where $f\in C^\infty(M)$. From the definition of $\tau$, it
immediately follows that
\begin{equation}\label{eq:tauV}
\tau(\beta)= i_V p_M^*\beta, \;\;\; \beta\in \Omega^k(M).
\end{equation}
Given $\alpha = \frac{1}{k!}\alpha_{i_1\ldots i_k}(x) \mathrm{d}x^{i_1}\wedge
\ldots \wedge \mathrm{d}x^{i_k}$, using Lemma \ref{lem:properties} we obtain
\begin{align*}
\alpha_T = & \frac{1}{k!} (\alpha_{i_1\ldots i_k})_T\,
p_M^*(\mathrm{d}x^{i_1}\wedge \ldots \wedge \mathrm{d}x^{i_k}) +
\frac{1}{k!}p_M^*\alpha_{i_1\ldots i_k} (\mathrm{d}x^{i_1}\wedge \ldots
\wedge
\mathrm{d}x^{i_k})_T\\
 = & \frac{1}{k!} (\alpha_{i_1\ldots i_k})_T \, p_M^*(\mathrm{d}x^{i_1}\wedge
\ldots \wedge \mathrm{d}x^{i_k}) +\\
& \frac{1}{k!}p_M^*\alpha_{i_1\ldots i_k} \sum_{n=1}^k
\mathrm{d}x^{i_1}\wedge \ldots \wedge (\mathrm{d}x^{i_n})_T \wedge \dots \wedge
\mathrm{d}x^{i_k}.
\end{align*}
It then follows from \eqref{eq:V} that $ \alpha_T = \Lie_V p_M^*
\alpha$. Using \eqref{eq:tauV} and Cartan's formula, we have
$$
\alpha_T = \mathrm{d} (i_V p_M^* \alpha) + i_V p_M^*\mathrm{d}\alpha = \mathrm{d}\tau(\alpha) +
\tau(\mathrm{d}\alpha).
$$
\end{proof}

\begin{example}\label{ex:closedT}
From Example \ref{ex:can1form}, it follows that if $\omega\in
\Omega^2(M)$, then
$$
\omega_T=-(\omega^\sharp)^*\omega_{can} + \tau(\mathrm{d}\omega).
$$
Here $\omega_{can}=-\mathrm{d}\theta_{can}=\mathrm{d}x^i\wedge \mathrm{d}p_i$ is the canonical
symplectic form on $T^*M$. (For the tangent lift of closed 2-forms,
see also \cite[Sec.~3]{courant2}).
\end{example}

An immediate consequence of \eqref{eq:magic} is the fact that
tangent lifts and exterior derivatives commute.

\begin{corollary}\label{cor:dcommute}
For $\alpha\in \Omega^k(M)$, $\mathrm{d}(\alpha_T)=(\mathrm{d}\alpha)_T$.
\end{corollary}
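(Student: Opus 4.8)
The plan is to deduce the corollary directly from the Cartan-like formula \eqref{eq:magic} of Proposition~\ref{prop:magic}, applied twice, together with the identity $\mathrm{d}\circ\mathrm{d}=0$. No further geometric input about tangent lifts is needed here: the statement is a purely formal consequence of \eqref{eq:magic}, which is precisely why it is flagged as immediate.

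First I would compute the left-hand side. By \eqref{eq:magic} we have $\alpha_T = \mathrm{d}\tau(\alpha) + \tau(\mathrm{d}\alpha)$, so applying the exterior derivative on $TM$ and using $\mathrm{d}\circ\mathrm{d}=0$ to kill the first summand gives $\mathrm{d}(\alpha_T) = \mathrm{d}\tau(\mathrm{d}\alpha)$. Next I would compute the right-hand side by applying \eqref{eq:magic} to the $(k+1)$-form $\mathrm{d}\alpha$ in place of $\alpha$: this yields $(\mathrm{d}\alpha)_T = \mathrm{d}\tau(\mathrm{d}\alpha) + \tau(\mathrm{d}\mathrm{d}\alpha)$, and since $\mathrm{d}\mathrm{d}\alpha=0$ the second term vanishes, leaving $(\mathrm{d}\alpha)_T = \mathrm{d}\tau(\mathrm{d}\alpha)$. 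Comparing the two expressions gives $\mathrm{d}(\alpha_T)=(\mathrm{d}\alpha)_T$, as claimed.

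The only point requiring any attention is to make sure that Proposition~\ref{prop:magic} is legitimately applicable to both $\alpha$ and $\mathrm{d}\alpha$; in particular one should handle the lowest-degree case, where $\tau$ lowers the form degree to zero (or below) and is understood to vanish, so that the formula reads $f_T=\tau(\mathrm{d}f)$ on functions. Since the common term $\mathrm{d}\tau(\mathrm{d}\alpha)$ governs both computations and there is no cancellation subtlety, this edge case causes no real obstacle, and the argument is otherwise a one-line manipulation.
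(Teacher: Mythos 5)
Your proof is correct and is exactly the argument the paper intends: the corollary is stated there as an immediate consequence of the Cartan-like formula \eqref{eq:magic}, obtained by applying $\mathrm{d}$ to $\alpha_T=\mathrm{d}\tau(\alpha)+\tau(\mathrm{d}\alpha)$ and comparing with the same formula applied to $\mathrm{d}\alpha$. Your remark on the degree-zero case (where $\tau$ on functions is taken to vanish, so $f_T=\tau(\mathrm{d}f)$) is a sensible extra check and consistent with the paper's definitions.
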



\subsection{Lie functor on multiplicative differential forms}

Let $\Grd$ be a Lie groupoid over $M$, $A=A\Grd$ its Lie algebroid,
and let $\alpha\in \Omega^k(\Grd)$. We can define an induced
$k$-form on $A$ by pulling back the tangent lift $\alpha_T \in
\Omega^k(T\Grd)$ via the inclusion $\iota_A:A\rmap T\Grd$. In this
section we discuss this operation when $\alpha$ is multiplicative.

Recall that a k-form $\alpha\in \Omega^k(\Grd)$ is
\textit{multiplicative} if
\begin{equation}\label{eq:mult}
m^*\alpha = p_1^*\alpha + p_2^*\alpha,
\end{equation}
where $p_1,p_2: \Grd^{(2)}\to \Grd$ are the natural projections, and
$m$ is the groupoid multiplication. We denote the associated
$k$-form on $A$ by
\begin{equation}\label{eq:Lieform}
\LF(\alpha) := \iota_A^*\alpha_T.
\end{equation}
Note that it follows from Corollary \ref{cor:dcommute} that
\begin{equation}
\mathrm{d} \LF(\alpha) = \LF(\mathrm{d} \alpha).
\end{equation}
In order to explain in which sense $\LF(\alpha)$ is the
infinitesimal counterpart of $\alpha$, we will need a known
alternative characterization of multiplicative forms.

The tangent groupoid structure on the tangent bundle $p_\Grd: T\Grd
\rmap \Grd$ over $TM$ induces a groupoid structure on the direct sum
$$
\prod_{p_{\Grd}}^n T\Grd = T\Grd \oplus \ldots \oplus T\Grd
$$
over the base $\prod_{p_M}^n TM=TM\oplus \ldots \oplus TM$ in a
canonical way.

\begin{lemma}\label{lem:morphism}
A $k$-form $\alpha \in \Omega^k(\Grd)$ ($k\geq 1$) is multiplicative
if and only if the bundle map $\alpha^\sharp:
\prod^{k-1}_{p_{\Grd}}T\Grd \rmap T^*\Grd$ (see \eqref{eq:sharp}) is
a groupoid morphism.
\end{lemma}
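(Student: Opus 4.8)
The plan is to unwind both conditions into a single family of pointwise identities and match them. Fix a composable pair $(g,h)\in\Grd^{(2)}$ with $\sour(g)=\tar(h)=x$, and note that multiplicativity \eqref{eq:mult} of $\alpha$ is exactly the statement that
\[
\alpha_{gh}(Tm(X_1,Y_1),\ldots,Tm(X_k,Y_k)) = \alpha_g(X_1,\ldots,X_k)+\alpha_h(Y_1,\ldots,Y_k)
\]
holds for every $k$-tuple $(X_i,Y_i)\in T_{(g,h)}\Grd^{(2)}$ (i.e.\ with $T\sour(X_i)=T\tar(Y_i)$); call this identity $(\star)$. I will show that $\alpha^\sharp$ is a groupoid morphism precisely when $(\star)$ holds for all such data. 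Two elementary facts drive the computation: first, $m$ is a submersion, so $T_{(g,h)}m$ maps $T_{(g,h)}\Grd^{(2)}$ onto $T_{gh}\Grd$, whence a covector at $gh$ is determined by its pairings with the vectors $Tm(X_k,Y_k)$; second, by the defining formula \eqref{eq:cotgmult}, two covectors $\xi_g\in T_g^*\Grd$ and $\eta_h\in T_h^*\Grd$ are composable (that is, $\tilde{\sour}(\xi_g)=\tilde{\tar}(\eta_h)$) if and only if $\xi_g(X)+\eta_h(Y)=0$ for all $(X,Y)\in\Ker T_{(g,h)}m$.

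For the implication that multiplicativity forces $\alpha^\sharp$ to be a morphism, I would first establish that $\alpha^\sharp$ preserves composability. Given composable tuples $(X_1,\dots,X_{k-1})$ over $g$ and $(Y_1,\dots,Y_{k-1})$ over $h$, I apply $(\star)$ to the $k$-tuples whose last pair $(X_k,Y_k)$ ranges over $\Ker T_{(g,h)}m$. Then $Tm(X_k,Y_k)=0$ kills the left-hand side, and what remains, $\alpha_g(X_1,\dots,X_{k-1},X_k)+\alpha_h(Y_1,\dots,Y_{k-1},Y_k)=0$, is precisely the vanishing condition characterizing composability of $\xi_g:=\alpha^\sharp(X_1,\dots,X_{k-1})$ and $\eta_h:=\alpha^\sharp(Y_1,\dots,Y_{k-1})$. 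Hence the product $\xi_g\circ\eta_h$ is defined. Next, pairing the candidate identity $\alpha^\sharp(Tm(X_1,Y_1),\dots,Tm(X_{k-1},Y_{k-1}))=\xi_g\circ\eta_h$ against an arbitrary vector $Tm(X_k,Y_k)$, the left side becomes $\alpha_{gh}(Tm(X_1,Y_1),\dots,Tm(X_k,Y_k))$ and, by \eqref{eq:cotgmult}, the right side becomes $\alpha_g(X_1,\dots,X_k)+\alpha_h(Y_1,\dots,Y_k)$; these agree by $(\star)$, and since the vectors $Tm(X_k,Y_k)$ exhaust $T_{gh}\Grd$ the two covectors coincide. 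Thus $\alpha^\sharp$ preserves products. Preservation of units then comes for free: a unit of the domain is an idempotent, its image is an idempotent in $T^*\Grd$, and idempotents in a groupoid are units. This yields that $\alpha^\sharp$ is a morphism, the base map $\prod^{k-1}_{p_M}TM\rmap A^*$ being the restriction of $\alpha^\sharp$ to units.

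For the converse, assume $\alpha^\sharp$ is a morphism. Then for composable tuples as above the product $\xi_g\circ\eta_h$ is defined and equals $\alpha^\sharp(Tm(X_1,Y_1),\dots,Tm(X_{k-1},Y_{k-1}))$. Pairing this equality with an arbitrary composable pair $Tm(X_k,Y_k)$ and using \eqref{eq:cotgmult} reproduces $(\star)$ for that data; letting all pairs $(X_i,Y_i)$ range over $T_{(g,h)}\Grd^{(2)}$ recovers multiplicativity. The two directions therefore rest on the same substitution into \eqref{eq:cotgmult}, read off in opposite orders.

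I expect the main obstacle to be the correct handling of the cotangent groupoid's source and target maps, that is, verifying that $\alpha^\sharp$ respects composability. The real content there is the identification of the composability condition $\tilde{\sour}(\xi_g)=\tilde{\tar}(\eta_h)$ with the vanishing of $\xi_g(X)+\eta_h(Y)$ along $\Ker T_{(g,h)}m$. Tracing the explicit source and target formulas through the parametrization of $\Ker T_{(g,h)}m$ by $A_x$, via $a\mapsto(-Tl_g(a-T\tar(a)),\,Tr_h(a))$, is where the signs and the role of the unit inclusion must be checked carefully. Once this structural fact about the cotangent groupoid is in place, both directions are a matter of substituting into \eqref{eq:cotgmult} and invoking surjectivity of $T_{(g,h)}m$.
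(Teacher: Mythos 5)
Your proof is correct, and it organizes the source/target part of the argument differently from the paper. The paper checks compatibility of $\alpha^\sharp$ with $\tilde{\tar}$ and $\tilde{\sour}$ separately, by feeding the differentiated groupoid identities \eqref{eq:ident1}--\eqref{eq:ident2} (e.g.\ $X = Tm(T\tar(X),X)$ and $Tr_g(u)=Tm(u,0)$) into the multiplicativity identity, together with the auxiliary observation that $\alpha$ vanishes on $k$-tuples of vectors tangent to $M$; it then recovers the base map directly from the explicit formulas for the structure maps of $T^*\Grd$. You instead collapse both checks into the single statement that $\alpha^\sharp$ preserves \emph{composability}, characterized by the vanishing of $\xi_g(X)+\eta_h(Y)$ on $\Ker T_{(g,h)}m$, and recover the unit/base-map compatibility abstractly from the fact that idempotents in a groupoid are units. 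This is cleaner to state and makes the two directions visibly symmetric, but the real content is merely relocated: proving that composability in $T^*\Grd$ is equivalent to vanishing on $\Ker T_{(g,h)}m$ requires exactly the same tracing of $\tilde{\sour}$, $\tilde{\tar}$ through the parametrization $a\mapsto(-Tl_g(a-T\tar(a)),\,Tr_h(a))$ of the kernel that the paper performs directly (your stated parametrization and signs are right, and the check goes through using $T\sour\circ Tl_g=T\sour$ and $T\tar\circ Tr_h=T\tar$). The final step --- evaluating the candidate product identity against $Tm(X_k,Y_k)$ and invoking surjectivity of $T_{(g,h)}m$ --- is identical in both arguments. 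So the two proofs are equivalent in substance; yours is slightly more conceptual, the paper's slightly more self-contained, and to submit yours as complete you would need to actually carry out the kernel computation you flag at the end.
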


\begin{proof}
Let us consider the following identities, obtained by
differentiating basic identities on any Lie groupoid (see
\cite[Lem.~3.1]{bcwz}):
\begin{align}
&(Tm)_{(\tar(g),g)}(T\tar(X),X)= X = (Tm)_{(g,\sour(g))}(X,T\sour(X)),\;\; \forall X\in T_g\Grd, \label{eq:ident1}\\
&(Tr_g)_{\tar(g)}(u)=(Tm)_{(\tar(g),g)}(u,0),\;\;
(Tl_g)_{\sour(g)}(v)=(Tm)_{(g,\sour(g))}(0,v)\label{eq:ident2}
\end{align}
where $u \in A_{\tar(g)}=\mbox{Ker}(Ts)|_{\tar(g)}$ and $v \in
\mbox{Ker}(Tt)|_{\sour(g)}$. Using the first identities in
\eqref{eq:ident1} and \eqref{eq:ident2}, we see that if $\alpha$ is
multiplicative, then by \eqref{eq:mult} we have
$$
\alpha(T\tar(X_1),\ldots,T\tar(X_{k-1}),u)=
\alpha(X_1,\ldots,X_{k-1}, Tr_g(u)),
$$
where $X_i \in T_g\Grd, u\in A_{\tar(g)}.$ This is precisely the
compatibility of $\alpha^\sharp$ with the target maps on
$\prod^{k-1}_{p_{\Grd}}T\Grd$ and $T^*\Grd$. Similarly, note that
\eqref{eq:ident1} and \eqref{eq:mult} imply that, if $Z_1,\ldots,
Z_k \in TM$, then $\alpha(Z_1,\ldots,Z_k)=0$. Using this fact, along
with \eqref{eq:mult} and the second identities in \eqref{eq:ident1}
and \eqref{eq:ident2}, we obtain the compatibility of
$\alpha^\sharp$ and the source maps:
$$
\alpha(T\sour(X_1),\ldots,T\sour(X_{k-1}),u)=\alpha(X_1,\ldots,X_{k-1},Tl_g(u-T\tar(u))),
$$
where $X_i \in T_g\Grd, u\in A_{\tar(g)}$.

Assuming that $\alpha^\sharp$ is compatible with the source and
target maps, we see that it is a groupoid morphism if and only if
$$
\alpha^\sharp(Tm(X_1,Y_1),\ldots,Tm(X_{k-1},Y_{k-1}))=
\alpha^\sharp(X_1,\ldots,X_{k-1})\circ
\alpha^\sharp(Y_1,\ldots,Y_{k-1}).
$$
By evaluating each side of the last equation on $Tm(X_k,Y_k)$, we
see that this condition is equivalent to
$$
\alpha(Tm(X_1,Y_1),\ldots,Tm(X_k,Y_k))=
\alpha(X_1,\ldots,X_k)+\alpha(Y_1,\ldots,Y_k),
$$
which is precisely the multiplicativity condition \eqref{eq:mult}.
\end{proof}

Given a groupoid morphism $\psi: \Grd_1 \rmap \Grd_2$, we denote the
associated morphism of Lie algebroids (given by the restriction of
$T\psi: T\Grd_1 \rmap T\Grd_2$ to $A\Grd_1\subset T\Grd_1$) by
$$
\LF(\psi): A\Grd_1 \rmap A\Grd_2.
$$
The natural projection $p_\Grd : T\Grd\to \Grd$ is a groupoid
morphism, and one can directly verify that there is a canonical
identification
$$
A(\prod^{k-1}_{p_{\Grd}}T\Grd)=\prod^{k-1}_{\LF(p_\Grd)}A(T\Grd).
$$
Using this identification we get, for any given multiplicative
$k$-form $\alpha\in \Omega^k(\Grd)$, a Lie algebroid morphism
\begin{equation}
\LF(\alpha^\sharp): \prod^{k-1}_{\LF(p_\Grd)}A(T\Grd) \rmap
A(T^*\Grd).
\end{equation}
The isomorphism $j_\Grd: T (A\Grd){\rmap} A(T\Grd) $, see
\eqref{eq:j}, induces an identification
\begin{equation}
j_\Grd^{(k)}: \prod^k_{p_A} T(A\Grd) \rmap
\prod^k_{\LF(p_\Grd)}A(T\Grd).
\end{equation}
Recall the isomorphism $\theta_\Grd: A(T^*\Grd)\to T^* (A\Grd)$
defined in \eqref{eq:thetag}.

\begin{proposition}\label{prop:lie}
For a multiplicative $k$-form $\alpha\in \Omega^k(\Grd)$,
$\LF(\alpha)$ and $\LF(\alpha^\sharp)$ are related by
$$
\LF(\alpha)^\sharp = \theta_\Grd \circ \LF(\alpha^\sharp) \circ
j_\Grd^{(k-1)}: \prod^{k-1}_{p_A} T(A\Grd) \rmap T^*(A\Grd).
$$
\end{proposition}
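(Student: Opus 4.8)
The plan is to reduce the asserted identity to a composition of the canonical isomorphisms recorded in Section \ref{subsec:tancot}, together with the definition of the tangent lift and the fact that $\LF(\alpha^\sharp)$ is, by construction, a restriction of $T\alpha^\sharp$. Both sides are bundle maps $\prod^{k-1}_{p_A}T(A\Grd)\rmap T^*(A\Grd)$, so it suffices to compare them on an arbitrary tuple $(V_1,\dots,V_{k-1})$ with all $V_i$ in a common fibre $T_a(A\Grd)$, $a\in A\Grd$.

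First I would record the pullback formula for the $\sharp$ operator. Since $\LF(\alpha)=\iota_A^*\alpha_T$, unwinding the definition of $\sharp$ in \eqref{eq:sharp} and pairing against an auxiliary $V_k\in T_a(A\Grd)$ gives
\begin{equation*}
\LF(\alpha)^\sharp(V_1,\dots,V_{k-1})=(T\iota_A)^t\big((\alpha_T)^\sharp(T\iota_A(V_1),\dots,T\iota_A(V_{k-1}))\big),
\end{equation*}
where $(T\iota_A)^t:\iota_A^*T^*(T\Grd)\rmap T^*(A\Grd)$ is the transpose appearing in the alternative expression for $\theta_\Grd$ stated just after \eqref{eq:thetag}. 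Note that the $T\iota_A(V_i)$ all lie in the single fibre $T_{\iota_A(a)}(T\Grd)$, hence form a legitimate argument for the domain $\prod^{k-1}_{p_{T\Grd}}T(T\Grd)$ of $(\alpha_T)^\sharp$.

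Next I would substitute the definition $(\alpha_T)^\sharp=\Theta_\Grd\circ T\alpha^\sharp\circ J_\Grd^{(k-1)}$ (the tangent lift, with $\Grd$ in place of $M$) and push the inclusions through the canonical involution. The commutative square displayed after \eqref{eq:j}, namely $J_\Grd\circ T\iota_A=\iota_{A(T\Grd)}\circ j_\Grd$, applied in each slot shows that $J_\Grd^{(k-1)}(T\iota_A(V_1),\dots,T\iota_A(V_{k-1}))$ is the slotwise $\iota_{A(T\Grd)}$-image of $j_\Grd^{(k-1)}(V_1,\dots,V_{k-1})$. Feeding this into $T\alpha^\sharp$ and using that $\LF(\alpha^\sharp)$ is by definition the restriction of $T\alpha^\sharp$ to the Lie algebroid $A(\prod^{k-1}_{p_\Grd}T\Grd)=\prod^{k-1}_{\LF(p_\Grd)}A(T\Grd)$ converts the result into $\iota_{A(T^*\Grd)}\big(\LF(\alpha^\sharp)(j_\Grd^{(k-1)}(V_1,\dots,V_{k-1}))\big)$. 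Applying $\Theta_\Grd$ and then $(T\iota_A)^t$, the composite $(T\iota_A)^t\circ\Theta_\Grd\circ\iota_{A(T^*\Grd)}$ is exactly $\theta_\Grd$ by the alternative expression for $\theta_\Grd$ noted above, so the whole chain collapses to $\theta_\Grd\circ\LF(\alpha^\sharp)\circ j_\Grd^{(k-1)}$, as desired.

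I expect the middle step to be the main obstacle: one must verify, compatibly with all the nested double vector bundle structures, that $\LF(\alpha^\sharp)$ --- obtained by applying the Lie functor to the groupoid morphism $\alpha^\sharp$ (which is a morphism by Lemma \ref{lem:morphism}) and transporting along the identification $A(\prod^{k-1}_{p_\Grd}T\Grd)=\prod^{k-1}_{\LF(p_\Grd)}A(T\Grd)$ --- really coincides, slotwise, with the pointwise restriction of $T\alpha^\sharp$ to algebroid elements of the form $\iota_{A(T\Grd)}(j_\Grd(V_i))$. This requires unwinding the definition of $\LF(\psi)$ as the restriction of $T\psi$ to $A\Grd_1\subset T\Grd_1$, the naturality of the identification of algebroids of fibre products, and careful bookkeeping to ensure every fibre product is taken over the correct base. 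Once this compatibility is established, the remaining manipulations are purely formal applications of the identities from Section \ref{subsec:tancot}; a coordinate computation in the style of Lemma \ref{lem:properties} is available as an independent cross-check.
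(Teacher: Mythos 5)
Your argument is correct and follows essentially the same route as the paper's proof: both rest on the identity $\theta_\Grd=(T\iota_{A\Grd})^t\circ\Theta_\Grd\circ\iota_{A(T^*\Grd)}$, the commutative square $J_\Grd\circ T\iota_{A\Grd}=\iota_{A(T\Grd)}\circ j_\Grd$ applied slotwise, the fact that $\iota_{A(T^*\Grd)}\circ\LF(\alpha^\sharp)=T\alpha^\sharp\circ\prod^{k-1}\iota_{A(T\Grd)}$, and the observation that $(T\iota_{A\Grd})^t\circ\alpha_T^\sharp\circ\prod^{k-1}T\iota_{A\Grd}=(\iota_A^*\alpha_T)^\sharp$. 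The only difference is cosmetic: you run the chain starting from $\LF(\alpha)^\sharp$ while the paper starts from $\theta_\Grd\circ\LF(\alpha^\sharp)\circ j_\Grd^{(k-1)}$, and the "middle step" you flag is exactly the restriction identity the paper also invokes (without further elaboration) as a direct consequence of the definition of the Lie functor on morphisms.
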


\begin{proof}
Recall that $ \theta_{\Grd}=(T\iota_{A\Grd})^*\circ \Theta_\Grd
\circ \iota_{A(T^*\Grd)}$ and $J_{\Grd} \circ T\iota_{A\Grd}=
\iota_{A(T\Grd)}\circ j_{\Grd}$. This last identity immediately
implies that
$$
J^{(k)}_{\Grd} \circ (\prod^k T\iota_{A\Grd}) = (\prod^k
\iota_{A(T\Grd)})\circ j^{(k)}_{\Grd}.
$$
Since $\iota_{A(T^*\Grd)}\circ \LF(\alpha^\sharp)=  T\alpha^\sharp
\circ \prod^{k-1} \iota_{A(T\Grd)}$, it follows that
\begin{align*}
\theta_\Grd \circ \LF(\alpha^\sharp) \circ j_\Grd^{(k-1)} &=
(T\iota_{A\Grd})^t\circ \Theta_\Grd \circ T\alpha^\sharp \circ
\prod^{k-1} \iota_{A(T\Grd)} \circ j_\Grd^{(k-1)}\\
&= (T\iota_{A\Grd})^t \circ \alpha_T^\sharp \circ (\prod^{k-1}
T\iota_{A\Grd}),
\end{align*}
and this last term is $(\iota_A^*\alpha_T)^\sharp =
(\LF(\alpha))^\sharp$.
\end{proof}

\begin{corollary}\label{cor:lieunique}
If $\alpha \in \Omega^k(\Grd)$ is multiplicative and $\Grd$ is
$\sour$-connected, then $\alpha = 0$ if and only if $\LF(\alpha)=0$.
\end{corollary}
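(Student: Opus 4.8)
The forward implication is immediate: if $\alpha=0$ then $\alpha_T=0$, so $\LF(\alpha)=\iota_A^*\alpha_T=0$. For the converse, the plan is to push the vanishing of $\LF(\alpha)$ down to the infinitesimal morphism $\LF(\alpha^\sharp)$ and then integrate it back using the uniqueness built into the Lie functor.

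First I would invoke Proposition~\ref{prop:lie}. Since $\theta_\Grd$ and $j_\Grd^{(k-1)}$ are isomorphisms and $\LF(\alpha)^\sharp=\theta_\Grd\circ\LF(\alpha^\sharp)\circ j_\Grd^{(k-1)}$, the hypothesis $\LF(\alpha)=0$ (hence $\LF(\alpha)^\sharp=0$) forces $\LF(\alpha^\sharp)$ to be the zero bundle map $\prod^{k-1}_{\LF(p_\Grd)}A(T\Grd)\rmap A(T^*\Grd)$. I would also record that the base map of $\LF(\alpha^\sharp)$ is the one induced from $\mathrm{id}$ on $A\Grd$ by the fixed isomorphisms $\theta_\Grd$ and $j_\Grd^{(k-1)}$, so it does not depend on $\alpha$.

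Next I would observe that the zero $k$-form is trivially multiplicative, so by Lemma~\ref{lem:morphism} its sharp map---namely the zero section $\prod^{k-1}_{p_{\Grd}}T\Grd\rmap T^*\Grd$, sending each tuple over $g\in\Grd$ to $0\in T^*_g\Grd$---is a groupoid morphism (indeed $\tilde{\sour}$ and $\tilde{\tar}$ send the zero covector to $0\in A^*$, and $0_g\circ 0_h=0_{gh}$ by \eqref{eq:cotgmult}). Rerunning the previous step with $\alpha=0$ shows that this zero section induces exactly the same Lie algebroid morphism as $\alpha^\sharp$, base map included.

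The final, and main, step is to deduce $\alpha^\sharp=0$ from the equality of the two induced infinitesimal morphisms. Here I would use the uniqueness part of the Lie functor for morphisms: a morphism out of an $\sour$-connected Lie groupoid is determined by its associated Lie algebroid morphism. The point to verify is that the source groupoid $\prod^{k-1}_{p_{\Grd}}T\Grd$ is $\sour$-connected whenever $\Grd$ is; this holds because the source map of the tangent groupoid $T\Grd$ is $T\sour$, whose fibres are affine bundles over the (connected) $\sour$-fibres of $\Grd$ since $\sour$ is a submersion, and the fibred product then inherits connected source fibres. Applying uniqueness to $\alpha^\sharp$ and the zero section yields $\alpha^\sharp=0$, i.e. $i_{X_{k-1}}\cdots i_{X_1}\alpha=0$ for all $X_i$, and therefore $\alpha=0$. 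I expect the main obstacle to be precisely this last step: confirming the $\sour$-connectedness of the tangent (and Whitney-sum) groupoid and applying the morphism-uniqueness theorem in the form needed for the cotangent target groupoid $T^*\Grd$.
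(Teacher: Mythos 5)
Your proof is correct and follows essentially the same route as the paper's: both reduce the statement to comparing $\alpha^\sharp$ with the zero groupoid morphism via Proposition~\ref{prop:lie} and then invoke the uniqueness of a groupoid morphism with prescribed Lie algebroid morphism over a source-connected groupoid. The only difference is that you spell out two points the paper leaves implicit (that the zero section is itself a groupoid morphism, and why $\prod^{k-1}_{p_\Grd}T\Grd$ inherits connected source-fibres), which is a welcome but inessential elaboration.
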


\begin{proof}
If $\Grd$ is $\sour$-connected, then $\prod^{k-1}_{p_\Grd}T\Grd$
also has connected source-fibres. We now use the fact that if two
groupoid morphisms $\Grd_1\rmap \Grd_2$ induce the same Lie
algebroid morphism and $\Grd_1$ has source-connected fibres, then
they must coincide. Hence $\alpha^\sharp=0$ if and only if
$\LF(\alpha^\sharp)=0$. The conclusion now follows since $\alpha=0$
(resp., $\LF(\alpha)=0$) is equivalent to $\alpha^\sharp =0$ (resp.,
$\LF(\alpha)^\sharp=0$), and $\LF(\alpha)^\sharp=0$ if only if
$\LF(\alpha^\sharp)=0$ by Proposition~\ref{prop:lie}.
\end{proof}

\section{Multiplicative 2-forms and their infinitesimal
counterparts}\label{sec:mult}

\subsection{Linear 2-forms on vector bundles}

Let $q:A\rmap M$ be a vector bundle, and consider the double vector
bundles $TA$ and $T^*A$, as in Section \ref{subsec:DVB}. A 2-form
$\Lambda \in \Omega^2(A)$ is called \textbf{linear} if
$$
\Lambda^\sharp:TA \rmap T^*A
$$
is a morphism of double vector bundles (cf. \cite[Sec.~7.3]{KU}). In
particular, there is a vector bundle map $\lambda: TM\to A^*$ (over
the identity)  such that the following diagram is commutative:
\begin{equation}\label{eq:diag}
\xymatrix{ TA  \ar[r]^{\Lambda^\sharp} \ar[d]_{Tq} & T^*A \ar[d]^{r} \\
TM \ar[r]_{\lambda} &  A^* .}
\end{equation}
In this case we say that $\Lambda$ \textbf{covers} $\lambda$.

\begin{remark}
The fact that a bivector field $\pi$ on a vector bundle $A$ is
linear  is equivalent \cite{KU, Mac-Xu} to the bundle map
$\pi^\sharp: T^*A\to TA$ being a morphism of double vector bundles.
Hence linear 2-forms are just their dual analogues.
\end{remark}

It is simple to check from the definition that a linear 2-form has a
local expression of the form:
\begin{align}
\Lambda & = \frac{1}{2}\Lambda_{ij}(x,u)\mathrm{d}x^i\wedge \mathrm{d}x^j + \Lambda_{jd}(x,u) \mathrm{d}x^j\wedge \mathrm{d}u^d \nonumber\\
&= \frac{1}{2}\Lambda_{ij,d}(x) u^d \mathrm{d}x^i\wedge \mathrm{d}x^j +
\lambda_{jd}(x) \mathrm{d}x^j\wedge \mathrm{d}u^d \label{eq:local}.
\end{align}
where $(x,u)=(x^j, u^d)$ are local coordinates in $A$ (relative to a
local basis $\{e_d\}$), and
$\lambda_{jd}=\SP{\lambda(\frac{\partial}{\partial {x^j}}),e_d}$.

\begin{example}\label{ex:linear}
The canonical symplectic form $\omega_{can}= \mathrm{d}x^j\wedge \mathrm{d}p_j$ on the
cotangent bundle $T^*M$ is linear. Any vector bundle map $\sigma: A
\rmap T^*M$, locally written as $\sigma(e_d)=\sigma_{jd}\mathrm{d}x^j$,
defines a linear 2-form on $A$ by pullback,
$$
\sigma^*\omega_{can} = u^d\frac{\partial \sigma_{id}}{\partial
x^k}\mathrm{d}x^i\wedge \mathrm{d}x^k + \sigma_{id}\mathrm{d}x^i\wedge \mathrm{d}u^d,
$$
covering the map $\lambda=\sigma^t: TM\rmap A^*$, where $\sigma^t$
is the fibrewise transpose of $\sigma$.
\end{example}

From the local expression \eqref{eq:local}, one can directly verify
that Example \ref{ex:linear} completely characterizes linear closed
2-forms:

\begin{proposition}\label{prop:linear}
A linear 2-form $\Lambda \in \Omega^2(A)$ is closed if and only if
it is of the form
$$
\Lambda = (\lambda^t)^* \omega_{can},
$$
where $\lambda^t: A \to T^*M$ is the fibrewise transpose of the
vector-bundle map $\lambda : TM \to A^*$ (see \eqref{eq:diag}).
\end{proposition}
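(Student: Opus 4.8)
The plan is to reduce the statement to a direct computation with the local normal form \eqref{eq:local}, taking advantage of the fact that, for a linear $2$-form, both coefficient functions $\Lambda_{ij,d}$ and $\lambda_{jd}$ appearing there depend only on the base coordinates $x$. The ``if'' direction is immediate and I would dispose of it first: $\omega_{can}$ is closed and the pullback of a closed form by any smooth map is closed, so $(\lambda^t)^*\omega_{can}$ is automatically closed.

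For the ``only if'' direction I would start from \eqref{eq:local} and compute $\mathrm{d}\Lambda$ term by term. Because $\Lambda_{ij,d}$ and $\lambda_{jd}$ are functions of $x$ alone, the exterior derivative splits cleanly into two groups according to whether or not a factor $\mathrm{d}u^d$ occurs. The group containing $\mathrm{d}u^d$ collects $\tfrac12\Lambda_{ij,d}\,\mathrm{d}x^i\wedge\mathrm{d}x^j\wedge\mathrm{d}u^d$ (from differentiating the first summand of \eqref{eq:local} in the fibre directions) together with $\tfrac{\partial\lambda_{jd}}{\partial x^i}\,\mathrm{d}x^i\wedge\mathrm{d}x^j\wedge\mathrm{d}u^d$ (from the second summand). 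Setting this group to zero and antisymmetrizing the coefficient in the pair $i,j$ yields the pointwise identity
$$
\Lambda_{ij,d} = \frac{\partial \lambda_{id}}{\partial x^j} - \frac{\partial \lambda_{jd}}{\partial x^i}.
$$
The remaining, purely horizontal, group consists of the terms $\tfrac12\tfrac{\partial\Lambda_{ij,d}}{\partial x^k}\,u^d\,\mathrm{d}x^k\wedge\mathrm{d}x^i\wedge\mathrm{d}x^j$; I would then note that once $\Lambda_{ij,d}$ is replaced by the curl expression above, this group vanishes identically by equality of mixed partials (each summand is a second derivative symmetric in a pair of indices that is being antisymmetrized). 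Hence it imposes no extra constraint, and closedness is \emph{equivalent} to the single identity displayed above.

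To finish, I would match this identity with Example \ref{ex:linear}. Setting $\sigma=\lambda^t$, so that $\sigma_{jd}=\lambda_{jd}$, the local formula for $(\lambda^t)^*\omega_{can}$ recorded there has $\mathrm{d}x^i\wedge\mathrm{d}u^d$-component equal to $\sigma_{id}=\lambda_{id}$ (matching the second summand of \eqref{eq:local}) and horizontal coefficient $\partial_j\sigma_{id}-\partial_i\sigma_{jd}$, which is precisely the curl expression forced by closedness. Therefore a closed linear $\Lambda$ agrees termwise with $(\lambda^t)^*\omega_{can}$, and conversely, establishing the claimed equivalence. I do not anticipate a genuine obstacle: the computation is routine, and the only points requiring care are the sign and index bookkeeping when collecting and antisymmetrizing the $\mathrm{d}u^d$-terms, and the (minor but essential) observation that vanishing of the purely horizontal part is automatic rather than an independent condition. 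Everything rests on the normal form \eqref{eq:local} and the diagram \eqref{eq:diag}, both already available.
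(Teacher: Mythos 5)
Your proposal is correct, and it is exactly the direct local verification from the normal form \eqref{eq:local} that the paper alludes to ("one can directly verify") but delegates to \cite{KU}; the sign and index bookkeeping in your curl identity and its match with Example \ref{ex:linear} both check out. No issues.
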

A proof of this result can be found in \cite[Sec.~7.3]{KU}.

\begin{example}\label{ex:omegaT}
If $\omega\in \Omega^2(M)$, then its tangent lift $\omega_T\in
\Omega^2(TM)$ is linear and covers the map $\lambda=\omega^\sharp:
TM \to T^*M$. If $\omega$ is closed, then so is $\omega_T$ (it is in
fact exact, by Proposition \ref{prop:magic}). It follows from
Proposition \ref{prop:linear} and the fact that
$(\omega^\sharp)^t=-\omega^\sharp$ that
$$
\omega_T= -(\omega^\sharp)^* \omega_{can},
$$
in agreement with Example \ref{ex:closedT}.
\end{example}

\begin{example}\label{ex:twist}
Let $\phi \in \Omega^3(M)$ be a 3-form on $M$. Then the 2-form
$\tau(\phi)$ on $TM$  defined by \eqref{eq:tau} is linear; it covers
the bundle map $\lambda: A\to T^*M$ that is zero on each fibre.
\end{example}


\subsection{Linear 2-forms on Lie algebroids}

Let $A\rmap M$ be a Lie algebroid. We will discuss two natural ways
to obtain linear 2-forms on $A$.

First, given any 3-form $\phi \in \Omega^3(M)$, we can use the
anchor $\rho: A \rmap TM$ to pull-back the linear 2-form
$\tau(\phi)$ to $A$. The resulting 2-form
$$
\rho^*(\tau(\phi))\in \Omega^2(A)
$$
is linear, covering the map $\lambda: TM\rmap A^*$ that is zero on
each fibre.

On the other hand, if $A=A\Grd$ is the Lie algebroid of a Lie
groupoid $\Grd$, then one obtains linear 2-forms on $A$ as
infinitesimal versions of multiplicative 2-forms on $\Grd$:

\begin{proposition}\label{prop:mult}
Let $\omega \in \Omega^2(\Grd)$ be a multiplicative 2-form, and let
$\lambda:TM \rmap A^*$ be
defined by
$\lambda(X)(u)=\omega(X,u)$, for $X\in TM$ and $u \in A$. Then
\begin{enumerate}
\item $\Lambda=\LF(\omega) \in \Omega^2(A)$ is linear and covers $\lambda$.

\item Given $\phi\in
\Omega^3(M)$ closed and if $\Grd$ is $\sour$-connected, then
$\mathrm{d}\omega= \sour^*\phi - \tar^*\phi$ if and only if
$$
\Lambda= (\lambda^t)^* \omega_{can} - \rho^*(\tau(\phi)).
$$
\end{enumerate}
\end{proposition}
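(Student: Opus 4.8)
The plan is to derive a single identity, valid for every multiplicative 2-form $\omega$,
$$\Lambda = \LF(\omega) = (\lambda^t)^*\omega_{can} + \iota_A^*\tau(\mathrm{d}\omega),\qquad(\star)$$
and to read off both (1) and (2) from it. To obtain $(\star)$ I would first apply Example \ref{ex:closedT} to the manifold $\Grd$ itself, giving $\omega_T = -(\omega^\sharp)^*\omega_{can} + \tau(\mathrm{d}\omega)$ on $T\Grd$, where $\omega^\sharp\colon T\Grd\to T^*\Grd$ and $\omega_{can}$ is the canonical form on $T^*\Grd$; pulling back along $\iota_A\colon A\to T\Grd$ and using $\LF(\omega)=\iota_A^*\omega_T$ reduces everything to identifying the two resulting terms. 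The key computation is $(\omega^\sharp\circ\iota_A)^*\omega_{can}^{T^*\Grd} = -(\lambda^t)^*\omega_{can}^{T^*M}$, which I would verify at the level of canonical 1-forms: the projection $T^*\Grd\to\Grd$ composed with $\omega^\sharp\circ\iota_A$ is the unit embedding $M\hookrightarrow\Grd$ precomposed with $q_A$, so the differential appearing in the pullback of $\theta_{can}$ lands in $TM\subset T\Grd|_M$; pairing $i_u\omega$ against it therefore only detects the $T^*M$-component of $i_u\omega$, which by $\lambda(X)(u)=\omega(X,u)$ is exactly $-\lambda^t(u)$, while the $A^*$-component drops out. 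This gives $(\omega^\sharp\circ\iota_A)^*\theta_{can}^{T^*\Grd} = -(\lambda^t)^*\theta_{can}^{T^*M}$, and applying $-\mathrm{d}$ (recall $\omega_{can}=-\mathrm{d}\theta_{can}$) yields the claim. For the remaining term, a pointwise evaluation gives $\iota_A^*\tau(\mathrm{d}\omega)_u(U_1,U_2) = \mathrm{d}\omega(u,X_1,X_2)$ with $X_i=Tq_A(U_i)\in TM$, which is linear in $u$ and covers the zero map, so $(\star)$ follows.

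Part (1) is then immediate from $(\star)$: by Example \ref{ex:linear} the term $(\lambda^t)^*\omega_{can}$ is linear and covers $(\lambda^t)^t=\lambda$, while $\iota_A^*\tau(\mathrm{d}\omega)$ is linear and covers the zero map, so $\Lambda$ is linear and covers $\lambda$. (Conceptually, linearity also follows from Proposition \ref{prop:lie}, which writes $\Lambda^\sharp = \theta_\Grd\circ\LF(\omega^\sharp)\circ j_\Grd$ as a composite of double vector bundle morphisms, using Lemma \ref{lem:morphism}.) For part (2) I would first record $\iota_A^*\tau(\sour^*\phi-\tar^*\phi) = -\rho^*\tau(\phi)$, which follows from $T\sour(u)=0$, $T\tar(u)=\rho(u)$ and $T\sour|_{TM}=T\tar|_{TM}=\mathrm{Id}$. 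The forward direction is then read off $(\star)$: substituting $\mathrm{d}\omega=\sour^*\phi-\tar^*\phi$ gives $\Lambda = (\lambda^t)^*\omega_{can}-\rho^*\tau(\phi)$.

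For the converse I would argue by uniqueness. Subtracting $(\star)$ from the hypothesis yields $\iota_A^*\tau(\mathrm{d}\omega)=-\rho^*\tau(\phi)=\iota_A^*\tau(\sour^*\phi-\tar^*\phi)$. The crucial observation is that $\eta:=\mathrm{d}\omega-(\sour^*\phi-\tar^*\phi)$ is a \emph{multiplicative} closed 3-form: $\mathrm{d}\omega$ is multiplicative since $\omega$ is, and $\sour^*\phi-\tar^*\phi$ is multiplicative because $\sour\circ m=\sour\circ p_2$, $\tar\circ m=\tar\circ p_1$, and $\sour\circ p_1=\tar\circ p_2$ on $\Grd^{(2)}$. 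Since $\eta$ is closed, Proposition \ref{prop:magic} gives $\eta_T=\mathrm{d}\tau(\eta)$, so $\LF(\eta)=\mathrm{d}\,\iota_A^*\tau(\eta)=\mathrm{d}(0)=0$; as $\Grd$ is $\sour$-connected, Corollary \ref{cor:lieunique} in degree $3$ forces $\eta=0$, i.e.\ $\mathrm{d}\omega=\sour^*\phi-\tar^*\phi$.

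The step I expect to be most delicate is the canonical-form identification $(\omega^\sharp\circ\iota_A)^*\omega_{can}^{T^*\Grd}=-(\lambda^t)^*\omega_{can}^{T^*M}$: one must keep straight the two distinct cotangent bundles, the sign convention $\omega_{can}=-\mathrm{d}\theta_{can}$, and the fact that the $A^*$-component of $i_u\omega$ genuinely disappears because the relevant differential lands in $TM$. The second subtle point is the easily overlooked multiplicativity of $\sour^*\phi-\tar^*\phi$, which is exactly what makes Corollary \ref{cor:lieunique} applicable in the converse.
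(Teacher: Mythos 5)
Your proof is correct, but it is organized around a different pivot than the paper's. You first establish the unconditional identity $\Lambda=(\lambda^t)^*\omega_{can}+\iota_A^*\tau(\mathrm{d}\omega)$ by applying Example~\ref{ex:closedT} on $\Grd$ and computing $(\omega^\sharp\circ\iota_A)^*\theta_{can}=-(\lambda^t)^*\theta_{can}$ directly (the computation checks out: $\pi_{T^*\Grd}\circ\omega^\sharp\circ\iota_A$ factors through the unit embedding, so only the $T^*M$-component of $i_u\omega$ survives, and that component is $-\lambda^t(u)$). This identity is not in the paper; it does everything Proposition~\ref{prop:linear} and Lemma~\ref{lem:morphism}/Proposition~\ref{prop:lie} are used for there. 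Concretely: the paper gets the covering map in part (1) by observing that $\omega^\sharp$ is a groupoid morphism restricting to $\lambda$ on units and invoking Proposition~\ref{prop:lie}, whereas you read it off your identity together with Example~\ref{ex:linear}; and in part (2) the paper reduces, via Corollary~\ref{cor:lieunique}, to the statement that the linear form $\Lambda+\rho^*\tau(\phi)$ is closed iff it equals $(\lambda^t)^*\omega_{can}$ (Proposition~\ref{prop:linear}), whereas you get the forward implication for free and handle the converse by showing $\LF(\eta)=\mathrm{d}\,\iota_A^*\tau(\eta)=0$ for the closed multiplicative 3-form $\eta=\mathrm{d}\omega-(\sour^*\phi-\tar^*\phi)$. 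Both arguments meet at Corollary~\ref{cor:lieunique}, and you are right that its applicability hinges on the multiplicativity of $\sour^*\phi-\tar^*\phi$, which the paper uses but does not spell out; your explicit verification via $\sour\circ m=\sour\circ p_2$, $\tar\circ m=\tar\circ p_1$, $\sour\circ p_1=\tar\circ p_2$ is a worthwhile addition. What your route buys is a sharper intermediate statement (valid for any 2-form, multiplicative or not) and independence from Proposition~\ref{prop:linear}; what it costs is the sign-sensitive canonical-form computation on $T^*\Grd$, which the paper's more structural argument avoids.
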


\begin{proof}
Let us prove $(1)$. Note that $\LF(\omega)=\iota_A^*\omega_T$ is
linear since $\omega_T \in \Omega^2(T\Grd)$ is linear, and the pull
back of a linear 2-form to a vector subbundle is again linear.

From Lemma~\ref{lem:morphism}, we know that $\omega^\sharp: T\Grd
\rmap T^*\Grd$ is a groupoid morphism, which restricts to the map
$\lambda:TM\rmap A^*$ on identity sections. As a result,
$\LF(\omega^\sharp)$ fits into the following commutative diagram:
$$
\xymatrix{ A(T\Grd)  \ar[r]^{\LF(\omega^\sharp)} \ar[d]_{} & A(T^*\Grd) \ar[d]^{} \\
TM \ar[r]_{\lambda} &  A^*, }
$$
and it follows from Proposition~\ref{prop:lie} that
$\Lambda=\LF(\omega)$ covers $\lambda$.

For part $(2)$, note that
$$
\LF(\sour^*\phi-\tar^*\phi)=\iota_A^*(\sour^*\phi)_T -
\iota_A^*(\tar^*\phi)_T.
$$
From \eqref{eq:magic} and the fact that $\mathrm{d}\phi=0$, we see that
$(\sour^*\phi)_T=\mathrm{d}\tau(\sour^*\phi)$ and
$(\tar^*\phi)_T=\mathrm{d}\tau(\tar^*\phi)$. A simple computation shows that
$\tau(\sour^*\phi)=(T\sour)^*\tau(\phi)$ and
$\tau(\tar^*\phi)=(T\tar)^*\tau(\phi)$. Hence
$\LF(\sour^*\phi-\tar^*\phi) = \mathrm{d}( \iota_A^* (T\sour)^*\tau(\phi) -
\iota_A^*(T\tar)^*\tau(\phi))$. Since $T\sour\circ \iota_A =0$ ($A$
is tangent to the $\sour$-fibres) and $T\tar \circ \iota_A=\rho$, we
obtain $\LF(\sour^*\phi-\tar^*\phi) = -\mathrm{d}\rho^*\tau(\phi)$. By
Corollary \ref{cor:lieunique}, we know that
$$
\mathrm{d}\omega - (\sour^*\phi-\tar^*\phi)=0 \iff \LF(\mathrm{d}\omega -
(\sour^*\phi-\tar^*\phi))=0.
$$
But $\LF(\mathrm{d}\omega - (\sour^*\phi-\tar^*\phi))= \mathrm{d}(\Lambda +
\rho^*\tau(\phi))$. Since the linear 2-form $\Lambda +
\rho^*\tau(\phi)$ covers $\lambda$, it follows from
Proposition~\ref{prop:linear} that
$$
\mathrm{d}(\Lambda + \rho^*\tau(\phi))=0 \iff \Lambda +
\rho^*\tau(\phi)=(\lambda^t)^*\omega_{can},
$$
as desired.
\end{proof}

To make the connection between this paper and the results in
\cite{bcwz} more transparent, it will be convenient to consider the
map $\sigma_\omega: A\rmap T^*M$ induced by $\omega\in
\Omega^2(\Grd)$ via
\begin{equation}\label{eq:sigma}
\sigma_\omega(u)(X)=\omega(u,X), \;\; u\in A, X\in TM.
\end{equation}
In the notation of Proposition~\ref{prop:mult}, we have
$\sigma_\omega=-\lambda^t$, so under the assumptions in part $(2)$,
$\Lambda=\LF(\omega)$ and $\sigma_\omega$ are related by
\begin{equation}\label{eq:relation}
\Lambda = -(\sigma_\omega^*\omega_{can} + \rho^*\tau(\phi)),
\end{equation}
in such a way that $\Lambda$ covers $-\sigma_\omega^t: TM \rmap
A^*$.

\subsection{IM 2-forms and Lie algebroid
morphisms}\label{subsec:IMmorphism} This subsection presents the key
step to the integration of IM 2-forms.

Let $A\rmap M$ be a Lie algebroid, with bracket $[\cdot,\cdot]$ and
anchor $\rho$. Let $\sigma:A\rmap T^*M$ be a vector bundle map (over
the identity) and $\phi\in \Omega^3(M)$ a closed 3-form. Motivated
by \eqref{eq:relation}, let us consider the linear 2-form $\Lambda
\in \Omega^2(A)$ defined by
\begin{equation}\label{eq:rel2}
\Lambda =  -(\sigma^*\omega_{can} + \rho^*\tau(\phi)),
\end{equation}
covering $-\sigma^t:TM\rmap A^*$. The following result describes
when such a 2-form induces a morphism between the tangent and
cotangent algebroid structures.

\begin{theorem}\label{thm:algmorp}
Let $\Lambda\in \Omega^2(A)$ be as in \eqref{eq:rel2}. The following
are equivalent:
\begin{itemize}
\item[(i)] The map $\Lambda^\sharp: TA \rmap T^*A$ is a Lie
algebroid morphism.
\item[(ii)] The map $\sigma:A\rmap T^*M$ satisfies
\begin{align}
\SP{\sigma(u),\rho(v)}&= - \SP{\sigma(v),\rho(u)}\label{eq:IM1}\\
\sigma([u,v])&=\Lie_{\rho(u)}\sigma(v)-i_{\rho(v)}\mathrm{d}\sigma(u) +
i_{\rho(v)}i_{\rho(u)}\phi, \label{eq:IM2}
\end{align}
for all $u,v \in \Gamma(A)$.
\end{itemize}
\end{theorem}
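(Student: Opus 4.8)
The plan is to verify directly the two defining conditions of a Lie algebroid morphism for the map $\Lambda^\sharp\colon TA\rmap T^*A$ — compatibility with the anchors and compatibility with the brackets — using the explicit description of the tangent Lie algebroid $TA\rmap TM$ (the relations \eqref{eq:TArel} and the anchor $\rho_{TA}=J_M\circ T\rho_A$, with $J_M$ as in \eqref{eq:canonicalinvolution}) together with the anchor and bracket of the cotangent Lie algebroid $T^*A\rmap A^*$, which I first recall in the coordinates $(x^j,u^d,p_j,\zeta_d)$ of Section~\ref{subsec:DVB}. Since $\Lambda$ is linear, $\Lambda^\sharp$ is automatically a morphism of double vector bundles covering $-\sigma^t\colon TM\rmap A^*$, so nothing is lost by working locally. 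Writing $\Lambda^\sharp$ out from \eqref{eq:rel2}, one finds $\zeta_d=-\sigma_{jd}\dot{x}^j$ and $p_j=\Lambda_{ij,e}\,u^e\dot{x}^i+\sigma_{jd}\dot{u}^d$, where the coefficients $\Lambda_{ij,e}$ are built from the first derivatives of $\sigma$ and from $\phi$ contracted with the anchor.

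First I would establish the anchor-compatibility $\rho_{T^*A}\circ\Lambda^\sharp=T(-\sigma^t)\circ\rho_{TA}$. Both sides are tangent vectors along $A^*$, and comparing their components splits cleanly: matching the coefficients of the fibre/core coordinate $\dot{u}$ reduces precisely to the skew-symmetry $\SP{\sigma(u),\rho(v)}=-\SP{\sigma(v),\rho(u)}$, i.e.\ \eqref{eq:IM1}; the terms bilinear in $u$ and $\dot{x}$ reduce, after reorganizing the derivative terms, to \eqref{eq:IM2}, with the summand $i_{\rho(v)}i_{\rho(u)}\phi$ produced exactly by the $\rho^*\tau(\phi)$ piece of $\Lambda$ entering the $\Lambda_{ij,e}$ (the remaining, $u$-linear, components matching identically). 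This already yields $(i)\Rightarrow(ii)$: a Lie algebroid morphism is in particular anchor-compatible, hence \eqref{eq:IM1} and \eqref{eq:IM2} hold.

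For the converse $(ii)\Rightarrow(i)$ it remains to check bracket-compatibility. Since the sections $Tu$ and $\widehat{u}$, $u\in\Gamma(A)$, generate $\Gamma(TA)$, it suffices to test the morphism condition against the three relations in \eqref{eq:TArel}, namely $[\widehat{u},\widehat{v}]=0$, $[Tu,\widehat{v}]=\widehat{[u,v]}$, and $[Tu,Tv]=T[u,v]$. I would compute the images $\Lambda^\sharp\circ Tu$ and $\Lambda^\sharp\circ\widehat{u}$, express them in terms of a convenient generating set of sections of $T^*A\rmap A^*$, and evaluate their cotangent brackets with the recalled bracket formula, confirming that under \eqref{eq:IM1} and \eqref{eq:IM2} these three relations are respected.

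The main obstacle is exactly this bracket-compatibility step. Because $\Lambda^\sharp$ covers the map $-\sigma^t$, which need not be fibrewise invertible, one cannot argue via naive $\Lambda^\sharp$-relatedness of sections but must use the general pullback characterization of Lie algebroid morphisms, writing the images of the generators as function-linear combinations of base sections and carrying the attendant Leibniz correction terms. Managing this bookkeeping, together with the antisymmetrization of the $\phi$-contribution (where $\mathrm{d}\phi=0$ is used), is where the computation is most delicate. I expect $[\widehat{u},\widehat{v}]=0$ to correspond to \eqref{eq:IM1}, $[Tu,\widehat{v}]=\widehat{[u,v]}$ to correspond to \eqref{eq:IM2}, and $[Tu,Tv]=T[u,v]$ to follow as a consistency check from the previous two.
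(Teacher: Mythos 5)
Your proposal follows essentially the same route as the paper: a local-coordinate verification in which anchor compatibility (tested on the core sections $\widehat{u}$ and the linear sections $Tu$) yields \eqref{eq:IM1} and \eqref{eq:IM2} respectively, and bracket compatibility is then checked on the three generator pairings using the pullback/Leibniz formulation \eqref{eq:brackets} of a Lie algebroid morphism, exactly the subtlety you flag. The only minor misprediction is that the core--core case $[\widehat{u},\widehat{v}]=0$ turns out to be vacuous (both sides vanish identically) rather than encoding \eqref{eq:IM1}, while the linear--linear case reduces to the exterior derivative of \eqref{eq:IM2}; neither point affects the correctness of your argument.
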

Vector bundle maps $\sigma:A \rmap T^*M$ satisfying conditions
\eqref{eq:IM1} and \eqref{eq:IM2} were introduced in \cite{bcwz} and
are referred to as \textbf{IM 2-forms} on $A$ (relative to $\phi$).
We also recall that a \textbf{morphism} between Lie algebroids
$A\rmap M$ and $B\rmap N$ (see, e.g., \cite{Mac-book}) is a vector
bundle map $\Psi:A\rmap B$, covering $\psi:M\rmap N$, which is
compatible with anchors, meaning that
$$
\rho_B \circ \Psi = T\psi \circ \rho_A,
$$
and compatible with brackets in the following sense. Consider the
pull-back bundle $\psi^*B\rmap M$, and let us keep denoting by
$\Psi$ the induced map $\Gamma(A)\rmap \Gamma(\psi^*B)$ at the level
of sections. Given sections $u,v\in \Gamma(A)$ such that
$\Psi(u)=f_j\psi^*u_j$ and $\Psi(v)=g_i\psi^*v_i$, where $f_j,g_i
\in C^\infty(M)$ and $u_j,v_i \in \Gamma(B)$, the following
condition should be valid:
\begin{equation}\label{eq:brackets}
\Psi([u,v]_{\st{A}})=f_jg_i\psi^*[u_j,v_i]_{\st{B}} +
\Lie_{\rho_A(u)}g_i \psi^*v_i - \Lie_{\rho_A(v)}f_j \psi^*u_j.
\end{equation}

We will need explicit local formulas for the tangent and cotangent
Lie algebroids. For a basis of local sections $\{e_d\}$ of $A$, we
denote the corresponding Lie algebroid structure functions by
$\rho^{j}_a$ and $C^{c}_{a b}$,
$$
\rho_A(e_a)= \rho_a^j \frac{\partial}{\partial x^j}, \;\;\;
[e_a,e_b] =C_{a b}^c e_c.
$$
Recall from Section \ref{subsec:tancot} that any section $u:M\rmap
A$ defines two types of sections of $TA\rmap TM$, denoted by $Tu$
and $\widehat{u}$. From \eqref{eq:TArel}, the tangent Lie algebroid
structure can be written as follows:


\begin{align}
&[\widehat{e}_a,\widehat{e}_b]_{\st{TA}}=0, \;\;
[T{e}_a,\widehat{e}_b]_{\st{TA}}=C^{c}_{ab}\widehat{e}_c, \;\;
[Te_a,Te_b]_{\st{TA}}= C_{ab}^cTe_c + \mathrm{d}C_{ab}^c\widehat{e}_c,\label{eq:TA1} \\
& \rho_{\st{TA}}(Te_a)=\rho^{j}_{a}\frac{\partial}{\partial x^j} +
\mathrm{d}\rho_a^j \frac{\partial}{\partial \dot{x}^j},\;\;\;
\rho_{\st{TA}}(\widehat{e}_a)=\rho^{j}_{a}\frac{\partial}{\partial
\dot{x}^j}.\label{eq:TA2}
\end{align}

To describe the Lie algebroid structure on $T^*A\rmap A^*$
explicitly, we also consider two types of sections that generate the
space of sections of $T^*A$ over $A^*$. The first type is induced
from a section $u\in \Gamma(A)$, and denoted by $u^L$. In local
coordinates $(x^j, \xi_d)$ on $A^*$ (relative to the basis of local
sections $\{e^d\}$ of $A^*$, dual to $\{ e_d\}$), it is given by
\begin{equation}\label{eq:linearT*}
u^L(x^j,\xi_d)=(x^j, u^d(x),0, \xi_d),
\end{equation}
where $T^*A$ is written locally in coordinates $(x^j, u^d, p_j,
\zeta_d)$ as in Section \ref{subsec:DVB}. The second type are
\textit{core sections}: locally, for each $\alpha=\alpha_j\mathrm{d}x^j \in
\Gamma(T^*M)$, we define the section $\widehat{\alpha}$ of
$T^*A\rmap A^*$ by
\begin{equation}\label{eq:coreT*}
\widehat{\alpha}(x^j,\xi_d)=(x^j,0,\alpha_j(x),\xi_d).
\end{equation}
The cotangent Lie algebroid is defined by the relations:

\begin{align}
&[\widehat{\mathrm{d}x^i},\widehat{\mathrm{d}x^j}]_{\st{T^*A}}=0, \;\;
[e_a^L,\widehat{\mathrm{d}x^j}]_{\st{T^*A}}=  \widehat{\mathrm{d}\rho_a^j}, \;\;
[e_a^L,e_b^L]_{\st{T^*A}}|_{(x,\xi)}=-\widehat{\mathrm{d}C^{c}_{ab}} \xi_c + C_{ab}^c e_c^L,\label{eq:T*A1}\\
&
\rho_{\st{T^*A}}(\widehat{\mathrm{d}x^i})=\rho^{i}_{a}\frac{\partial}{\partial
\xi_a},\;\;\;
\rho_{\st{T^*A}}(e_a^L)|_{(x,\xi)}=\rho^{i}_{a}\frac{\partial}{\partial
{x}^i} + C_{ab}^c \xi_c \frac{\partial}{\partial
\xi_b}.\label{eq:T*A2}
\end{align}

We now turn to the proof of Theorem~\ref{thm:algmorp}.

\begin{proof}
We work locally, so we assume that $M$ has coordinates $(x^j)$. Then
$A$ has coordinates $(x^j,u^d)$ (relative to a basis of local
sections $\{e_d\}$), $TA$ has tangent coordinates
$(x^j,u^d,\dot{x}^j,\dot{u}^d)$, while induced coordinates on $T^*A$
are denoted by $(x^j,u^d,p_j,\zeta_d)$. Similarly, $A^*$ has dual
coordinates $(x^j,\xi_d)$, inducing coordinates
$(x^j,\xi_d,\dot{x}^j,\dot{\xi}_d)$ on $TA^*$.

We start by discussing when $\Lambda^\sharp$ is compatible with the
anchors, i.e.,
\begin{equation}\label{eq:compatrho}
T(-\sigma^t) \circ \rho_{\st{TA}} = \rho_{\st{T^*A}}\circ
\Lambda^\sharp.
\end{equation}
Let us consider local expressions of the relevant maps. We write
$\sigma:A\rmap T^*M$ and $\sigma^t:TM\rmap A^*$ locally as
$$
\sigma(x^j,u^d)=(x^j,u^d\sigma_{jd}(x)),\;\;\;
\sigma^t(x^j,\dot{x}^j)=(x^j,\dot{x}^j\sigma_{jd}(x)).
$$
Denoting coordinates on $TM$ by $(x^j,\dot{x}^j)$, and on $T(TM)$ by
$(x^j,\dot{x}^j,\delta x^j, \delta \dot{x}^j)$, we get
$$
T(-\sigma^t)(x^j,\dot{x}^j,\delta x^j, \delta
\dot{x}^j)=(x^j,-\dot{x}^l\sigma_{ld},\delta {x}^j,
-\dot{x}^l\frac{\partial \sigma_{ld}}{\partial x^k} \delta x^k -
\sigma_{ld}\delta \dot{x}^l) \in TA^*.
$$
One can directly verify that the map $\Lambda^\sharp$ can be locally
written as follows:
\begin{equation}\label{eq:localL}
\Lambda^\sharp(x^j,u^d,\dot{x}^j,\dot{u}^d)= (x^j,u^d, p_j ,
\zeta_d),
\end{equation}
where
$$
p_j= \dot{x}^lu^d \left (\frac{\partial \sigma_{jd}}{\partial x^l} -
\frac{\partial \sigma_{ld}}{\partial x^j} \right ) +
\dot{u}^d\sigma_{jd}-\phi_{ijk}u^d\rho_d^k\dot{x}^i, \; \;\;
\zeta_d= -\dot{x}^l\sigma_{ld}.
$$
The space of sections of $TA \rmap TM$ is generated by sections of
types $Te_a$ and $\widehat{e}_b$. We have
\begin{align}
\Lambda^\sharp(Te_a|_{(x,\dot{x})})&= \left(x^j,\delta_{ad},
\dot{x}^l \left(\frac{\partial \sigma_{ja}}{\partial x^l}-
\frac{\partial \sigma_{la}}{\partial x^j}\right) - \phi_{ijk}\rho_a^k \dot{x}^i, -\dot{x}^l\sigma_{ld}\right),\label{eq:L1}\\
\Lambda^\sharp(\widehat{e}_b|_{(x,\dot{x})})& = (x^j, 0,
\sigma_{jb}, -\dot{x}^l\sigma_{ld}).\label{eq:L2}
\end{align}

Using \eqref{eq:TA2} and \eqref{eq:T*A2}, one can directly check
that
$$
T(-\sigma^t) (\rho_{\st{TA}}(\widehat{e}_b|_{(x,\dot{x})}))= (x^j,
-\dot{x}^l\sigma_{ld},0,-\sigma_{ld}\rho_b^l) \in (-\sigma^t)^*TA^*.
$$
On the other hand, using the local expression \eqref{eq:localL}, we
have
$$
\rho_{\st{T^*A}}(\Lambda^\sharp(\widehat{e}_b|_{(x,\dot{x})}))=
(x^j,-\dot{x}^l\sigma_{ld},0,\rho^l_d\sigma_{lb}).
$$
It follows that the compatibility \eqref{eq:compatrho} for core
sections amounts to
$$
\SP{\rho(e_b),\sigma(e_d)}=-\SP{\rho(e_d),\sigma(e_b))},
$$
which is equivalent to \eqref{eq:IM1}.

For sections of type $Te_b$, again using \eqref{eq:TA2} and
\eqref{eq:T*A2}, we get
$$
T(-\sigma^t)(\rho_{\st{TA}}(Te_b|_{(x,\dot{x})}))=(x^j, -\dot{x}^l
\sigma_{ld},\rho^j_b,\zeta_d) \in (-\sigma^t)^*TA^*,
$$
where
\begin{equation}\label{eq:zeta1}
\zeta_d = -\dot{x}^l\left (\frac{\partial \sigma_{ld}}{\partial
x^k}\rho^k_b + \sigma_{id}\frac{\partial \rho^i_b}{\partial
x^l}\right) =  -\SP{\Lie_{\rho(e_b)}\sigma(e_d),\dot{x}}.
\end{equation}
Similarly, we compute
$$
\rho_{\st{T^*A}}(\Lambda^\sharp(Te_b|_{(x,\dot{x})}))= (x^j,
-\dot{x}^l \sigma_{ld},  \rho_b^j, \zeta'_d),
$$
where
\begin{align}
\zeta'_d &= \dot{x}^l\rho_d^k \left(\frac{\partial
\sigma_{kb}}{\partial x^l}  -\frac{\partial \sigma_{lb}}{\partial
x^k} \right ) - \phi_{ijk}\rho_b^k\dot{x}^i\rho_d^j +
C_{db}^c\dot{x}^l\sigma_{lc}\nonumber\\
&=\SP{-i_{\rho(e_d)} (\mathrm{d}\sigma(e_b)) + i_{\rho(e_d)}i_{\rho(e_b)}\phi
+ \sigma([e_d,e_b]),\dot{x}}.\label{eq:zeta2}
\end{align}
Comparing \eqref{eq:zeta1} and \eqref{eq:zeta2}, it follows that the
compatibility \eqref{eq:compatrho} for sections of the type $Te_b$
is verified if and only if \eqref{eq:IM2} holds.

Let us now check the bracket-preserving condition
\eqref{eq:brackets}, that in our case reads
\begin{align}\label{eq:brkpres}
\Lambda^\sharp([U,V]_{\st{TA}}|_{(x,\dot{x})})  = & f_j g_i
[U_j,V_i]_{\st{T^*A}}|_{-\sigma^t(x,\dot{x})} +
\Lie_{\rho_{TA}(U)}g_i V_i|_{-\sigma^t(x,\dot{x})}\\
& - \Lie_{\rho_{TA}(V)}f_j U_j|_{-\sigma^t(x.\dot{x})},\nonumber
\end{align}
where $U,V\in \Gamma(TA)$, and $f_j,g_i \in C^\infty(TM)$, $U_j,V_i
\in \Gamma(T^*A)$ are such that $\Lambda^\sharp(U)=f_j
(-\sigma^t)^*U_j$ and $\Lambda^\sharp(V)=g_i (-\sigma^t)^*V_i$.

From \eqref{eq:L1}, \eqref{eq:L2}, we can write
\begin{align}
\Lambda^\sharp(Te_a|_{(x,\dot{x})})&= e^L_a|_{-\sigma^t(x,\dot{x})}
+ f^a_j \widehat{\mathrm{d}x^j}|_{-\sigma^t(x,\dot{x})}, \label{eq:Lf}\\
\Lambda^\sharp(\widehat{e}_a|_{(x,\dot{x})})& =
\widehat{\sigma(e_a)}|_{-\sigma^t(x,\dot{x})} = g^a_i
\widehat{\mathrm{d}x^i}|_{-\sigma^t(x,\dot{x})},\label{eq:Lg}
\end{align}
where
\begin{equation}\label{eq:fg}
f^a_j = \dot{x}^l \left(\frac{\partial \sigma_{ja}}{\partial x^l}-
\frac{\partial \sigma_{la}}{\partial x^j}\right) -
\phi_{ijk}\rho_a^k \dot{x}^i, \;\;\; g^a_i = \sigma_{ia},
\end{equation}
so we can express the images in terms of sections of types
\eqref{eq:linearT*} and \eqref{eq:coreT*} on $T^*A$. It will be
useful to note that the functions $f^a_j=f^a_j(x,\dot{x})$ satisfy
\begin{equation}\label{eq:feq}
f^a_j\mathrm{d}x^j=i_{\dot{x}}\mathrm{d}\sigma(e_a) - i_{\dot{x}}i_{\rho(e_a)}\phi,
\end{equation}
viewed as an equality of \textit{horizontal} 1-forms on $TM$, i.e.
1-forms of type $\alpha_j(x,\dot{x})\mathrm{d}x^j$ (in this formula,
$\dot{x}$ is seen as the vector field
$\dot{x}^l\frac{\partial}{\partial x^l}$ on $TM$). In fact, locally,
there is an identification of the space of horizontal 1-forms on
$TM$ with a subspace of sections of $(-\sigma^t)^*T^*A$ via
\begin{equation}
\Omega^1_{hor}(TM) \rmap \Gamma((-\sigma^t)^*T^*A),\;\;\;\;
\alpha_j(x,\dot{x})\mathrm{d}x^j\mapsto
\alpha_j(x,\dot{x})\widehat{\mathrm{d}x^j}|_{-\sigma^t(x,\dot{x})}.
\end{equation}
In the remainder  of this section, we will use this identification
to view horizontal 1-forms on $TM$ as sections of the bundle
$(-\sigma^t)^*T^*A$. In particular, in order to simplify our
notation, we will write $\widehat{\mathrm{d}x^j}|_{-\sigma^t(x,\dot{x})}$
just as $\mathrm{d}x^j$.

Since it suffices to verify condition \eqref{eq:brkpres} for
sections of types $Te_a$ (linear) and $\widehat{e}_a$ (core), we
have three cases to analyze.

\medskip

\noindent{\it Core-core sections}

\smallskip

If $U=\widehat{e}_a$ and $V=\widehat{e}_b$ are core sections, then
by \eqref{eq:TA1} we know that
$[\widehat{e}_a,\widehat{e}_b]_{\st{TA}}=0$, so the left-hand side
of \eqref{eq:brkpres} vanishes. On the other hand, from
\eqref{eq:TA2}, the Lie derivatives on the right-hand side. of
\eqref{eq:brkpres} are only with respect to the variable $\dot{x}$.
Since the functions $g_j$ in \eqref{eq:Lg} do not depend on
$\dot{x}$ and $[\widehat{\mathrm{d}x^i},\widehat{\mathrm{d}x^j}]_{\st{T^*A}}=0$, it
follows that, for a pair of core sections, the right-hand side of
\eqref{eq:brkpres} vanishes as well.

\medskip

\noindent{\it Core-linear sections}

\smallskip

Let us consider \eqref{eq:brkpres} when $U=Te_a$ and
$V=\widehat{e}_b$. Since $[Te_a,\widehat{e}_b]_{\st{TA}}= C_{ab}^c
\widehat{e}_c$, it follows from \eqref{eq:Lg} that the left-hand
side of \eqref{eq:brkpres} is
$$
\Lambda^\sharp([Te_a,\widehat{e}_b]_{\st{TA}})= \sigma([e_a,e_b]).
$$

Using the bracket relations \eqref{eq:T*A1}, one directly sees that
the first term on the right-hand side of \eqref{eq:brkpres} is just
$\sigma_{ib}\mathrm{d}\rho_{a}^i$. For the second term, we have
\begin{align*}
(\Lie_{\rho_{\st{TA}}(Te_a)}\sigma_{ib}) \mathrm{d}x^i & =
\Lie_{\rho_a^l\frac{\partial}{\partial x^l}}(\sigma_{ib}\mathrm{d}x^i)-
\sigma_{ib}\Lie_{\rho_a^l\frac{\partial}{\partial
x^l}}\mathrm{d}x^i \\
&= \Lie_{\rho(e_a)}\sigma(e_b)- \sigma_{ib} \mathrm{d}\rho_a^i.
\end{align*}
The third term on the right-hand side of \eqref{eq:brkpres} is given
by
\begin{align*}
(\Lie_{\rho_{\st{TA}}(\widehat{e}_b)} f^a_j) \mathrm{d}x^j & = \left
(\rho_b^l\left(\frac{\partial\sigma_{ja}}{\partial x^l}-
\frac{\partial \sigma_{la}}{\partial x^j}\right) -
\phi_{ijk}\rho_a^k \rho_b^i \right )\mathrm{d}x^j\\
&= i_{\rho(e_b)}\mathrm{d}\sigma(e_a) - i_{\rho(e_b)}i_{\rho(e_a)}\phi.
\end{align*}
As a result, in this case, \eqref{eq:brkpres} is equivalent to
$$
\sigma([e_a,e_b]) = \Lie_{\rho(e_a)}\sigma(e_b) -
i_{\rho(e_b)}\mathrm{d}\sigma(e_a) + i_{\rho(e_b)}i_{\rho(e_a)}\phi,
$$
which agrees with condition \eqref{eq:IM2}.

\medskip

\noindent{\it Linear-linear sections}

\smallskip

We finally consider \eqref{eq:brkpres} when $U=Te_a$ and $V=Te_b$.
From \eqref{eq:TA1}, \eqref{eq:Lf} and \eqref{eq:Lg}, and using
\eqref{eq:feq}, we see that the left-hand side of \eqref{eq:brkpres}
is
\begin{align*}
\Lambda^\sharp([Te_a,Te_b]_{\st{TA}})&= C_{ab}^c
e_c^L|_{-\sigma^t(x,\dot{x})} + C_{ab}^cf_j^c\mathrm{d}x^j+
\mathrm{d}C_{ab}^c(\dot{x})\sigma(e_c)\\
&= [e_a,e_b]^L|_{-\sigma^t(x,\dot{x})} +
C_{ab}^c(i_{\dot{x}}\mathrm{d}\sigma(e_c) - i_{\dot{x}}i_{\rho(e_c)}\phi) +
\mathrm{d}C_{ab}^c(\dot{x})\sigma(e_c)\\
&= [e_a,e_b]^L|_{-\sigma^t(x,\dot{x})} +
i_{\dot{x}}\mathrm{d}\sigma([e_a,e_b])+ \mathrm{d}C_{ab}^c \SP{\sigma(e_c),\dot{x}}
-i_{\dot{x}}i_{[\rho(e_a),\rho(e_b)]}\phi.
\end{align*}
As in \eqref{eq:feq}, we abuse notation and use $\dot{x}$ also to
represent the vector field $\dot{x}^l\frac{\partial}{\partial x^l}$.

 By \eqref{eq:Lf}, we can write
$\Lambda^\sharp(Te_a)=e_a^L+f_j^a \mathrm{d}x^j$ and
$\Lambda^\sharp(Te_b)=e_b^L+f_i^b \mathrm{d}x^i$. Using \eqref{eq:T*A1}, we
see that the first term on the right-hand side of \eqref{eq:brkpres}
is
$$
[e_a,e_b]^L|_{-\sigma^t(x,\dot{x})} +
\mathrm{d}C_{ab}^c\SP{\sigma^t(\dot{x}),e_c} - f_j^a {\mathrm{d}\rho_b^j} +
f_i^b{\mathrm{d}\rho_a^i}.
$$
Note that the second and third terms on the right-hand side of
\eqref{eq:brkpres} are $\Lie_{\rho_{\st{TA}}(Te_a)}f^b_i \mathrm{d}x^i$ and
$\Lie_{\rho_{\st{TA}}(Te_b)}f^a_j {\mathrm{d}x^j}$, respectively. Let us find
a more explicit expression for the latter (the former is clearly
completely analogous). Since
$$
\Lie_{\rho_{\st{TA}}(Te_b)}f^a_j {\mathrm{d}x^j}=
\Lie_{\rho_{\st{TA}}(Te_b)}(f^a_j {\mathrm{d}x^j})-f^a_j
{\Lie_{\rho_{\st{TA}}(Te_b)}\mathrm{d}x^j},
$$
it follows that
$$
\Lie_{\rho_{\st{TA}}(Te_b)}f^a_j
{\mathrm{d}x^j}={\Lie_{\rho_{\st{TA}}(Te_b)}i_{\dot{x}}\mathrm{d}\sigma(e_a)} -
{\Lie_{\rho_{\st{TA}}(Te_b)}i_{\dot{x}}i_{\rho(e_a)}\phi}-f^a_j{\mathrm{d}\rho_b^j}.
$$
Let us consider the (local) vector fields on $TM$ given by $V_b=
\mathrm{d}\rho^l_b(\dot{x})\frac{\partial}{\partial {x}^l}$ and
$V'_b=\mathrm{d}\rho^l_b(\dot{x})\frac{\partial}{\partial \dot{x}^l}$, so
that $\rho_{\st{TA}}(Te_b)=\rho(e_b) + V'_b$. It is simple to check
that $[\rho(e_a), \dot{x}]=-V_b$ and $\Lie_{V_b'}i_{\dot{x}}\alpha=
i_{V_b}\alpha$ for any $\alpha=\frac{1}{2}\alpha_{ij}(x)\mathrm{d}x^i\wedge
\mathrm{d}x^j$. Using Cartan calculus, we find
\begin{align*}
\Lie_{\rho_{\st{TA}}(Te_b)}i_{\dot{x}}\mathrm{d}\sigma(e_a)& =
\Lie_{\rho(e_b)}i_{\dot{x}}\mathrm{d}\sigma(e_a) +
\Lie_{V_b'}i_{\dot{x}}\mathrm{d}\sigma(e_a)\\
& = -i_{V_b}\mathrm{d}\sigma(e_a) + i_{\dot{x}}\Lie_{\rho(e_b)}\mathrm{d}\sigma(e_a) +
i_{V_b}\mathrm{d}\sigma(e_a)\\
&=i_{\dot{x}}di_{\rho(e_b)}\mathrm{d}\sigma(e_a).
\end{align*}
Similarly,
$$
\Lie_{\rho_{\st{TA}}(Te_b)}i_{\dot{x}}i_{\rho(e_a)}\phi=
i_{\dot{x}}\Lie_{\rho(e_b)}i_{\rho(e_a)}\phi.
$$
As a result, we obtain
$$
\Lie_{\rho_{\st{TA}}(Te_b)}f^a_j {\mathrm{d}x^j}=
i_{\dot{x}}di_{\rho(e_b)}\mathrm{d}\sigma(e_a)-
i_{\dot{x}}\Lie_{\rho(e_b)}i_{\rho(e_a)}\phi -f^a_j {\mathrm{d}\rho_b^j}.
$$
Analogously, we have
$$
\Lie_{\rho_{\st{TA}}(Te_a)}f^b_i \mathrm{d}x^i  =
i_{\dot{x}}di_{\rho(e_a)}\mathrm{d}\sigma(e_b)-
i_{\dot{x}}\Lie_{\rho(e_a)}i_{\rho(e_b)}\phi -f^b_i{\mathrm{d}\rho_a^i}.
$$
Hence \eqref{eq:brkpres} amounts to the identity
\begin{align}\label{eq:lastid}
{i_{\dot{x}}\mathrm{d}\sigma([e_a,e_b])} -
{i_{\dot{x}}i_{[\rho(e_a),\rho(e_b)]}\phi}=&
i_{\dot{x}}\mathrm{d}i_{\rho(e_a)}\mathrm{d}\sigma(e_b)-
i_{\dot{x}}\Lie_{\rho(e_a)}i_{\rho(e_b)}\phi\\ &-
i_{\dot{x}}\mathrm{d}i_{\rho(e_b)}\mathrm{d}\sigma(e_a)+
i_{\dot{x}}\Lie_{\rho(e_b)}i_{\rho(e_a)}\phi.\nonumber
\end{align}
By basic Cartan calculus of forms, we have the identity
$$
i_{[\rho(e_a),\rho(e_b)]}\phi-\Lie_{\rho(e_a)}i_{\rho(e_b)}\phi+\Lie_{\rho(e_b)}i_{\rho(e_a)}\phi
= \mathrm{d}i_{\rho(e_b)}i_{\rho(e_a)}\phi.
$$
It follows that \eqref{eq:lastid} is equivalent to
\begin{align*}
\mathrm{d}\sigma([e_a,e_b])&=\mathrm{d}(i_{\rho(e_a)}\mathrm{d}\sigma(e_b)-i_{\rho(e_b)}\mathrm{d}\sigma(e_a)
+ i_{\rho(e_b)}i_{\rho(e_a)}\phi)\\
&= \mathrm{d}(\Lie_{\rho(e_a)}\sigma(e_b)
-\mathrm{d}i_{\rho(e_a)}\sigma(e_b)-i_{\rho(e_b)}\mathrm{d}\sigma(e_a) +
i_{\rho(e_b)}i_{\rho(e_a)}\phi)\\
&= \mathrm{d}(\Lie_{\rho(e_a)}\sigma(e_b)-i_{\rho(e_b)}\mathrm{d}\sigma(e_a) +
i_{\rho(e_b)}i_{\rho(e_a)}\phi),
\end{align*}
which holds by \eqref{eq:IM2}.
\end{proof}
\subsection{Applications to integration}

In this section we present an alternative proof of the main result
in \cite{bcwz}, which describes IM 2-forms as infinitesimal versions
of multiplicative 2-forms (\cite[Theorem~2.5]{bcwz}).

Let $\Grd$ be a Lie groupoid over $M$, with Lie algebroid $A$. Let
us denote the space of multiplicative 2-forms on $\Grd$ by
$\Omega^2_{mult}(\Grd)$, and the space of linear 2-forms on $A$ by
$\Omega^2_{lin}(A)$. We also consider the subspace
$\Omega^2_{alg}(A)\subset \Omega^2_{lin}(A)$ of linear 2-forms
$\Lambda$ for which $\Lambda^\sharp:TA\rmap T^*A$ is a Lie algebroid
morphism.

A direct consequence of Proposition~\ref{prop:lie} is that
$\LF(\alpha)^\sharp$ is a Lie algebroid morphism for any
multiplicative $k$-form $\alpha$ on $\Grd$. Using
Proposition~\ref{prop:mult}, part $(1)$, we conclude that the Lie
functor on multiplicative forms gives rise to a well-defined map
\begin{equation}\label{eq:LF}
\LF: \Omega^2_{mult}(\Grd)\rmap \Omega^2_{alg}(A), \;\; \omega
\mapsto \Lambda=\LF(\omega).
\end{equation}

\begin{proposition}
If $\Grd$ is $\sour$-simply-connected, then \eqref{eq:LF} is a
bijection.
\end{proposition}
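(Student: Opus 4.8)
The plan is to prove injectivity and surjectivity separately. Injectivity is immediate: the assignment $\omega\mapsto \LF(\omega)=\iota_A^*\omega_T$ is linear, and if $\LF(\omega)=0$ for a multiplicative $\omega$, then since $\Grd$ is $\sour$-connected Corollary~\ref{cor:lieunique} forces $\omega=0$. Hence \eqref{eq:LF} is injective.

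For surjectivity, fix $\Lambda\in\Omega^2_{alg}(A)$, so that $\Lambda^\sharp:TA\rmap T^*A$ is a Lie algebroid morphism. Using the Lie algebroid isomorphisms $j_\Grd:TA\rmap A(T\Grd)$ and $\theta_\Grd:A(T^*\Grd)\rmap T^*A$ from \eqref{eq:j} and \eqref{eq:thetag}, we obtain a Lie algebroid morphism
$$
\Psi:=\theta_\Grd^{-1}\circ\Lambda^\sharp\circ j_\Grd^{-1}:A(T\Grd)\rmap A(T^*\Grd).
$$
I would first observe that $T\Grd\rightrightarrows TM$ is $\sour$-simply-connected whenever $\Grd$ is: its source map is $T\sour$, and since $\sour$ is a submersion the source fibre $(T\sour)^{-1}(X)$ over $X\in T_xM$ is an affine bundle over the source fibre $\sour^{-1}(x)$ of $\Grd$, its fibre over $g$ being the affine subspace $\{v\in T_g\Grd:T\sour(v)=X\}$; an affine bundle is homotopy equivalent to its base, so $(T\sour)^{-1}(X)$ is connected and simply-connected. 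Lie's second theorem then integrates $\Psi$ to a unique Lie groupoid morphism $\Phi:T\Grd\rmap T^*\Grd$ with $\LF(\Phi)=\Psi$.

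The main obstacle is to show that $\Phi=\omega^\sharp$ for a (necessarily unique) 2-form $\omega\in\Omega^2(\Grd)$; equivalently, that $\Phi$ is a morphism of vector bundles over $\id_\Grd$ whose associated bilinear form on $T\Grd$ is skew-symmetric. Each such property is the integrated shadow of a property of $\Lambda^\sharp$, and I would deduce it from the uniqueness clause of Lie's second theorem. The fibrewise scalar multiplications $h_t$ and additions of the vector bundles $T\Grd\rmap\Grd$ and $T^*\Grd\rmap\Grd$ are groupoid morphisms, since $Tm$ and the cotangent multiplication \eqref{eq:cotgmult} are fibrewise linear; because $\Lambda^\sharp$ is linear, hence a morphism of the double vector bundles over $\id_A$, the pairs of groupoid morphisms $\Phi\circ h_t$ and $h_t\circ\Phi$ (and likewise for addition and for the bundle projections) induce the same algebroid morphism, so uniqueness forces them to agree. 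This shows that $\Phi$ covers $\id_\Grd$ and is fibrewise linear, so that $\omega(v,w):=\SP{\Phi(v),w}$ is a well-defined $(0,2)$-tensor on $\Grd$. To see that $\omega$ is skew, recall that $T^*\Grd$ is the dual VB-groupoid of $T\Grd$, so that transposition sends groupoid morphisms over $\id_\Grd$ to groupoid morphisms over $\id_\Grd$ and commutes with $\LF$; since $\Lambda$ is a 2-form one has $(\Lambda^\sharp)^t=-\Lambda^\sharp$, whence the transpose $\Phi^t$ and $-\Phi$ induce the same algebroid morphism and must coincide by uniqueness. Thus $\omega\in\Omega^2(\Grd)$.

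Finally, $\Phi=\omega^\sharp$ is a groupoid morphism, so $\omega$ is multiplicative by Lemma~\ref{lem:morphism}, and $\LF(\omega^\sharp)=\Psi$ by construction; Proposition~\ref{prop:lie} then gives $\LF(\omega)^\sharp=\theta_\Grd\circ\LF(\omega^\sharp)\circ j_\Grd=\theta_\Grd\circ\Psi\circ j_\Grd=\Lambda^\sharp$, so $\LF(\omega)=\Lambda$. This proves surjectivity and completes the argument.
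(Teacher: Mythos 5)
Your proof is correct and follows essentially the same route as the paper: integrate $\theta_\Grd^{-1}\circ\Lambda^\sharp\circ j_\Grd^{-1}$ by Lie's second theorem and then use the uniqueness of integrations of algebroid morphisms on source-connected groupoids (applied to the bundle projections, additions, scalar multiplications and the pairing, exactly as in Mackenzie--Xu) to transfer the double-vector-bundle and skew-symmetry properties of $\Lambda^\sharp$ to the resulting groupoid morphism. You make explicit a few points the paper leaves implicit -- the source-simple-connectedness of $T\Grd$ via the affine-bundle structure of its source fibres, the injectivity via Corollary~\ref{cor:lieunique}, and the final check that $\LF(\omega)=\Lambda$ through Proposition~\ref{prop:lie} -- but the argument is the same.
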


\begin{proof}
We will show that \eqref{eq:LF} has an inverse map. If
$\Lambda\in\Omega^2_{alg}(A)$, then $\Lambda^\sharp:TA\rmap T^*A$ is
a morphism of algebroids. So
$$
\theta_\Grd^{-1} \circ  \Lambda^\sharp \circ j_\Grd^{-1}:
A(T\Grd)\rmap A(T^*\Grd)
$$
is a Lie algebroid morphism. Since $\Grd$ is
$\sour$-simply-connected, so is $T\Grd$. By Lie's second theorem for
algebroids (see, e.g., \cite{Mac-book}), there exists a unique Lie
groupoid morphism $\omega^\sharp: T\Grd \rmap T^*\Grd$ with
$\LF(\omega^\sharp)=\theta_\Grd^{-1} \circ  \Lambda^\sharp \circ
j_\Grd^{-1}$, or $\Lambda^\sharp = \theta_\Grd\circ
\LF(\omega^\sharp) \circ j_\Grd$.

It remains to check that $\omega^\sharp$ is indeed the bundle map
associated with a 2-form $\omega \in \Omega^2(\Grd)$, i.e., that it
is a vector-bundle map (covering the identity) with respect to the
bundle structures $T\Grd\rmap \Grd$ and $T^*\Grd \rmap \Grd$, and
that $(\omega^\sharp)^t = -\omega^\sharp$. A proof of this fact can
be given just as in \cite{Mac-Xu2}: the key point is that the bundle
projections $p_\Grd : T\Grd \rmap \Grd$, $c_\Grd:T^*\Grd\rmap \Grd$,
the vector bundle sums $T\Grd\times_{p_\Grd} T\Grd \rmap T\Grd$,
$T^*\Grd\times_{c_\Grd} T^*\Grd \rmap T^*\Grd$, and scalar
multiplications $T\Grd \times \mathbb{R}\rmap T\Grd$, $T^*\Grd
\times \mathbb{R}\rmap T^*\Grd$, as well as the natural pairing
$T\Grd\times_{(p_\Grd,c_\Grd)} T^*\Grd\rmap \mathbb{R}$ are all
groupoid morphisms. The corresponding maps for Lie algebroids (after
the identifications \eqref{eq:j} and \eqref{eq:thetag}) are
precisely the vector bundle structure maps and pairing for $p_A:
TA\rmap A$ and $c_A: T^*A\rmap A$, see, e.g., \cite{Mac-Xu}. For
example, to prove that $c_\Grd\circ \omega^\sharp=p_\Grd$, it
suffices to verify this condition (by the connectivity of the
source-fibres) at the level of algebroids. But then we have
$$
\LF(c_\Grd\circ \omega^\sharp)= c_A\circ \Lambda^\sharp =
p_A=\LF(p_\Grd).
$$
The other properties of $\omega^\sharp$ are derived from those of
$\Lambda^\sharp$ similarly, as in \cite[Theorem~4.1]{Mac-Xu2}.
\end{proof}

\begin{corollary}[\cite{bcwz}]
If $\Grd$ is $\sour$-simply-connected and $\phi\in\Omega^3(M)$ is
closed, there is a one-to-one correspondence between multiplicative
2-forms on $\Grd$ satisfying $\mathrm{d}\omega = \sour^*\phi-\tar^*\phi$ and
IM 2-forms $\sigma:A\rmap T^*M$ relative to $\phi$.
\end{corollary}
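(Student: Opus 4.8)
The plan is to obtain the correspondence by restricting the bijection $\LF:\Omega^2_{mult}(\Grd)\rmap \Omega^2_{alg}(A)$ from the previous Proposition to the appropriate subspaces and matching it against the normal form \eqref{eq:rel2}. Write $\Omega^2_{mult,\phi}(\Grd)\subset \Omega^2_{mult}(\Grd)$ for the multiplicative 2-forms satisfying $\mathrm{d}\omega=\sour^*\phi-\tar^*\phi$. I would produce the desired bijection as a composite
\[
\Omega^2_{mult,\phi}(\Grd)\;\xrightarrow{\;\LF\;}\;\mathcal{N}\;\xrightarrow{\;\sim\;}\;\{\text{IM 2-forms relative to }\phi\},
\]
where $\mathcal{N}\subset \Omega^2_{alg}(A)$ is the set of those linear 2-forms of the normal form \eqref{eq:rel2}, i.e.\ $\Lambda=-(\sigma^*\omega_{can}+\rho^*\tau(\phi))$ for some vector-bundle map $\sigma:A\rmap T^*M$, whose associated $\Lambda^\sharp$ is a Lie algebroid morphism.

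First I would check that $\LF$ restricts to a bijection onto $\mathcal{N}$. Given $\omega\in\Omega^2_{mult,\phi}(\Grd)$, the previous Proposition gives $\Lambda=\LF(\omega)\in\Omega^2_{alg}(A)$, and Proposition~\ref{prop:mult}(2) (valid since $\Grd$ is $\sour$-connected and $\phi$ is closed) shows that the relative closedness forces $\Lambda$ into the normal form \eqref{eq:relation}, so $\Lambda\in\mathcal{N}$. Conversely, any $\Lambda\in\mathcal{N}$ has a unique $\LF$-preimage $\omega=\LF^{-1}(\Lambda)$, and the \emph{iff} in Proposition~\ref{prop:mult}(2), read in the reverse direction, gives $\mathrm{d}\omega=\sour^*\phi-\tar^*\phi$, i.e.\ $\omega\in\Omega^2_{mult,\phi}(\Grd)$. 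Thus $\LF$ maps $\Omega^2_{mult,\phi}(\Grd)$ bijectively onto $\mathcal{N}$.

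It then remains to identify $\mathcal{N}$ with the set of IM 2-forms. The assignment $\sigma\mapsto\Lambda=-(\sigma^*\omega_{can}+\rho^*\tau(\phi))$ is injective, since such a $\Lambda$ covers $-\sigma^t:TM\rmap A^*$ (see \eqref{eq:rel2}), whence $\sigma$ is recovered as the fibrewise transpose of the covered map. By Theorem~\ref{thm:algmorp}, a 2-form of this form lies in $\Omega^2_{alg}(A)$ --- equivalently, in $\mathcal{N}$ --- precisely when $\sigma$ is an IM 2-form relative to $\phi$. Hence $\sigma\mapsto\Lambda$ is a bijection from IM 2-forms onto $\mathcal{N}$, and composing its inverse with $\LF^{-1}$ produces the claimed one-to-one correspondence $\omega\leftrightarrow\sigma_\omega$, with $\sigma_\omega$ as in \eqref{eq:sigma}.

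The argument is essentially an assembly of results already in hand, so I do not expect a genuine obstacle; the only point needing attention is the bookkeeping --- using Proposition~\ref{prop:mult}(2) as a genuine equivalence in \emph{both} directions, and confirming that $\sigma\mapsto\Lambda$ and $\Lambda\mapsto-\lambda^t$ are mutually inverse on $\mathcal{N}$ --- after which the correspondence drops out of the composition of the three established bijections and equivalences.
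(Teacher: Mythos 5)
Your proposal is correct and follows essentially the same route as the paper: it combines the bijection $\LF:\Omega^2_{mult}(\Grd)\to\Omega^2_{alg}(A)$ with the equivalence in Proposition~\ref{prop:mult}(2) and Theorem~\ref{thm:algmorp}, only spelling out more explicitly the bookkeeping (the intermediate set $\mathcal{N}$ and the injectivity of $\sigma\mapsto\Lambda$) that the paper leaves implicit.
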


\begin{proof}
We know that $\LF:\Omega^2_{mult}(\Grd)\rmap \Omega^2_{alg}(A)$,
$\omega\mapsto \Lambda=\LF(\omega)$ is a bijection, and by
Proposition~\ref{prop:mult}, part $(2)$,
$\mathrm{d}\omega=\sour^*\phi-\tar^*\phi$ if and only if
$\Lambda=-(\sigma_\omega^*\omega_{can} + \rho^*\tau(\phi))$. The
conclusion now follows from Theorem~\ref{thm:algmorp}.
\end{proof}


\end{document}